\renewcommand{\epsilon}{\varepsilon}
\newcommand{\eps}{\epsilon}
\newcommand{\nn}{\nonumber}
\newcommand{\MR}{\mathsf{MR}}
\newcommand{\bra}[1]{\left(#1\right)}
\newcommand{\na}{\nabla}
\newcommand{\pa}{\partial}
\newtheorem{theorem}{{\bf Theorem}}[section]
\theoremstyle{definition} \newtheorem{definition}{\bf Definition}
\theoremstyle{plain} 
\newtheorem{lemma}{Lemma}
\newtheorem{remark}{Remark}[section]
\DeclareMathOperator*{\esssup}{ess\,sup}
\title{\bf \Large Fast-reaction limits for predator--prey reaction--diffusion systems: improved convergence}
\author[$\dagger$]{Cinzia Soresina}
\author[$\dagger$]{Bao Quoc Tang\thanks{Corresponding author.\\
		Email addresses: cinzia.soresina@uni-graz.at, quoc.tang@uni-graz.at, bao-ngoc.tran@uni-graz.at}}
\author[$\dagger$,$\ddagger$]{Bao-Ngoc Tran}
\affil[$\dagger$]{\small{\textit{Institute for Mathematics and Scientific Computing, University of Graz, Graz, Austria}}}
\affil[$\ddagger$]{\small{\textit{Department of Mathematics,  Nong Lam University, Ho Chi Minh City, Vietnam}}}
\date{}  
\begin{document}
	
	\maketitle

	\begin{abstract} 
	\noindent
The fast-reaction limit for reaction--diffusion systems modelling predator--prey interactions is investigated.  In the considered model,  predators exist in two possible states, namely searching and handling. The switching rate between these two states happens on a much faster time scale than other processes, leading to the consideration of the fast-reaction limit for the corresponding systems. The rigorous convergence of the solution to the fast-reaction system to the ones of the limiting cross-diffusion system has been recently studied in [Conforto, Desvillettes, Soresina, NoDEA, 25(3):24, 2018]. In this paper, we extend these results by proving improved convergence of solutions and slow manifolds. In particular, we prove that the slow manifold converges strongly in all dimensions without additional assumptions, thanks to use of a modified energy function. This consists in a unified approach since it is applicable to both types of fast-reaction systems, namely with the Lotka--Volterra and the Holling-type II terms.
		
		\vspace*{0.15cm} 
		
		\noindent \textbf{Keywords:} \textit{Fast-reaction limit; Predator--prey models; Reaction--diffusion systems; Energy method; Slow manifold.}
		
	\end{abstract}

	\section{Introduction} 
	
	An interesting modelling issue in several contexts is the derivation of cross-diffusion terms via Quasi-Steady-State Approximation (QSSA) from a fast-reaction model involving different time scales and standard diffusion.  In fact,  the justification of cross-diffusion terms from semilinear reaction–didffusion systems including fast-reactions and linear diffusion is fundamental to the understanding of the hidden processes that cross-diffusion terms can describe. It is extremely interesting to identify such fast processes at the \emph{microscopic level} (in terms of time scales) and to obtain the \emph{macroscopic} limiting system in which these fast processes are naturally incorporated either in the reaction or in the nonlinear diffusion.
	In \cite{iida2006diffusion},  this approach has been exploited to justify the well-known cross-diffusion SKT model for competing species, by considering an instantaneous (fast) conversion between two subpopulations or states.  Recently,  the fast dichotomy dynamics has been applied to predator--prey interactions \cite{conforto2018reaction, desvillettes2019non,izuhara2023cross}, aggregation phenomena in biology \cite{eliavs2022aggregation, funaki2012link}, hunter--gatherers and farmers in Neolithic transition \cite{eliavs2021singular},  evolution of dietary diversity and starvation effects \cite{brocchieri2021evolution}, autotoxicity effects in plant growth dynamics \cite{GIS}, and behavioural epidemiology \cite{BS}.
	
	Althought the QSSA can be performed at a formal level,  a proper analysis of the convergence of solutions to the fast-reaction system to the limiting cross-diffusion one is challenging and needs sophisticated techniques \cite{conforto2014rigorous,desvillettes2015new,conforto2018reaction, CKJES2023, tang2023rigorous}.
	This paper is devoted to study the fast-reaction limit of the following predator--prey model incorporating competition among predators,  proposed in \cite{conforto2018reaction}. Let $\Omega\subset\mathbb R^d$, $d\in \mathbb N$, be a bounded domain with smooth boundary $\partial\Omega$, e.g. $\partial\Omega$ is of class $C^{2+\kappa}$ for some $\kappa>0$. The model writes
	\begin{align}\label{e1}
	\left\{ \begin{array}{clllll}
	\partial_t N^\varepsilon  - d_1\Delta N^\varepsilon  &=& r_0(1-\eta N^\varepsilon )N^\varepsilon  - \dfrac{\alpha  p_s^\varepsilon}{\xi p_s^\varepsilon+1} N^\varepsilon    , & x\in \Omega, \,t>0, \vspace*{0.15cm}\\
	\partial_t p_s^\varepsilon- d_2\Delta p_s^\varepsilon &=&   \dfrac{1}{\varepsilon} \left( \gamma  p_h^\varepsilon  - \dfrac{\alpha  p_s^\varepsilon}{\xi p_s^\varepsilon+1}N^\varepsilon  \right) -\mu p_s^\varepsilon + \Gamma p_h^\varepsilon , & x\in \Omega, \,t>0, \vspace*{0.15cm} \\
	\partial_t p_h^\varepsilon - d_3\Delta p_h^\varepsilon  &=&   \dfrac{1}{\varepsilon} \left(  \dfrac{\alpha  p_s^\varepsilon}{\xi p_s^\varepsilon+1}N^\varepsilon  - \gamma  p_h^\varepsilon  \right)     -\mu p_h^\varepsilon ,  & x\in \Omega, \,t>0,  
	\end{array}  
	\right.
	\end{align} 
	where $N^\varepsilon  \ge 0$ denotes the prey population density, $p_s^\varepsilon \ge0$ and $p_h^\varepsilon  \ge 0$ the population densities of searching and handling predators, respectively.  Assuming that the populations are confined on the domain, system \eqref{e1} is subjected to homogeneous Neumann boundary conditions
	\begin{equation}
	\nabla N^\eps \cdot \nu =  \nabla p_s^\eps \cdot \nu = \nabla p_h^\eps\cdot \nu = 0, \quad x\in\partial\Omega,\, t>0,
	\end{equation}
	where $\nu$ is the outward normal vector to $\pa\Omega$ at point $x$, and non-negative initial data
	\begin{equation}
	N^\eps(x,0) = N_{in}(x), \quad p_h^\eps(x,0) = p_{h,in}(x), \quad p_s^\eps(x,0) = p_{s,in}(x), \quad x\in\Omega.
	\end{equation}
	
	Here,  parameters $d_1, \,d_2, \,d_3$ are the diffusion coefficients of prey, searching, and handling predators, respectively.  We assume that handling predators move less far than searching predators, yielding the biologically meaningful constraint on the diffusion coefficients $d_3<d_2$.  In the prey equation, the prey-growth is modelled by a logistic term with maximum growth rate $r_0$ and carrying capacity $1/\eta$, while we have a loss term due to predation. This term involves the parameter $\alpha$ (attack rate) and parameter $\xi$, which measures the competition among predators for prey.  The mortality rate $\mu$ is assumed to be the same for both types of predators and only handling predators give rise, with birth rate $\Gamma$, to searching predators.  We also suppose that there is a conversion (or switch) between the two states: a searching predator becomes handling when encountering a prey and comes back to the searching state after a certain amount of time $1/\gamma$. Furthermore, we assume $d_1, \,d_2, \, d_3,\,r_0,\,\eta,\,\alpha,\,\gamma,\, \Gamma, \,\mu>0$\, and $\xi\ge 0$.  Note that, for $\xi=0$, we have no competition effect for prey among predators and the predation and switching terms reduce to the Lotka--Volterra type.
	Under the assumption that the searching--handling switch happens on a much faster time scale than the reproduction and mortality processes,  we have that $0<\varepsilon\ll 1$.
	
	\medskip
	
	As $\eps\to 0$, in some suitable sense, we formally expect that the switching dynamics reaches a \emph{quasi-steady-state}, namely
	\begin{align}
	\gamma  p_h^\varepsilon  - \dfrac{\alpha  p_s^\varepsilon}{\xi p_s^\varepsilon+1}N^\varepsilon \to 0, 
	\end{align}
	as well as $p_s^\varepsilon \to p_s$, $p_h^\varepsilon \to p_h$,   $N^\varepsilon \to N$ which satisfy
	\begin{align}
	\gamma  p_h   - \dfrac{\alpha  p_s }{\xi p_s +1}N  = 0.  \label{CriManiPPM}
	\end{align}
	Denoting with $P$ the total predator population, we have that $P:=p_s+p_h$. Together with equation \eqref{CriManiPPM}, the component $p_h$ must solve the algebraic equation
	\begin{align}
	\gamma \xi p_h^2 - (\alpha N+\gamma +\gamma  \xi P) p_h + \alpha N P =0.  \label{CriManiPPM2}
	\end{align}
	\medskip
	We treat the case $\xi=0$ (no competition among predators) and the case $\xi>0$ (with competition effect) separately.
	For the case $\xi=0$, by solving equation \eqref{CriManiPPM2}, we obtain
	\begin{equation*}
	p_h = \frac{\alpha NP}{\alpha N + \gamma}, \quad \text{ and } \quad p_s = P - p_h = \frac{\gamma P}{\alpha N + \gamma}.
	\end{equation*}
	By adding up the second and third equations of system \eqref{e1},  we obtain an equation for the total predator population and it is easy to see the expected limiting system for $(N, P)$ is
	\begin{equation}\label{LimitSystem_xi_zero}
	\left\{
	\begin{aligned}
	&\partial_t N - d_1\Delta N = r_0(1-\eta N)N - \frac{\gamma \alpha NP}{\alpha N + \gamma}, && x\in\Omega, t>0\\
	&\partial_t P - d_2\Delta P = (d_3-d_2)\Delta \left(\frac{\alpha N P}{\alpha N + \gamma}\right) - \mu P + \Gamma \frac{\alpha N P}{\alpha N + \gamma}, && x\in\Omega, t>0,
	\end{aligned}
	\right.
	\end{equation}
	subject to homogeneous Neumann boundary conditions
	\begin{equation}\label{bc}
	\nabla N\cdot \nu = \nabla P \cdot \nu = 0, \quad x\in\partial\Omega, \; t>0,
	\end{equation}
	and initial data
	\begin{equation}\label{ic}
	N(x,0) = N_{in}(x), \quad P(x,0) = p_{h,in}(x) + p_{s,in}(x), \quad x\in \Omega.
	\end{equation}
	\medskip
	For the case $\xi>0$, due to equation \eqref{CriManiPPM}, one has $p_h<\alpha N / \gamma  \xi$. This implies
	\begin{align}
	p_h = \varphi(N,P) := \frac{A - \sqrt{B^2+4\gamma ^2 \xi P} }{2\xi \gamma }, \quad p_s=   P - \varphi (N,P), 
	\end{align}
	where $A=A(N,P):=\alpha N+\gamma +\gamma  \xi P$ and $B=B(N,P):=\alpha N+\gamma -\gamma  \xi P$. Therefore, the couple $(N,P)$ is expected to solve the cross-diffusion system
	\begin{equation}
	\left\{ \begin{aligned}
	&\partial_t N  -d_1\Delta N = r_0( 1-\eta N )N    - \gamma \varphi(N,P) , && x\in\Omega, t>0,\\ 
	&\partial_t P -d_2  \Delta P =  (d_3-d_2)\Delta   \varphi(N,P)  + \Gamma  \varphi(N,P) -\mu P , &&x\in\Omega, t>0,
	\end{aligned}  \right.  \label{LimitSystem_xi_positive}
	\end{equation}
	subject boundary conditions \eqref{bc} and initial data \eqref{ic}.  From the modelling point of view, the limiting systems inherit and naturally incorporate in the cross-diffusion terms the istantaneous switch between the two predator states.  A by-product is the appearence of nonlinear functional responses in the reaction part: we obtain a (prey-dependent) Holling-type II and a (predator-dependent) Beddington--DeAngelis-like functional response, if $\xi=0$ and $\xi>0$, respectively.   A sketch of the compartments in the fast-reaction systems and in the limiting system is shown in Figure \ref{fig:switch}.
	
	\begin{figure}[h!]
	\centering
	\begin{tikzpicture}[even odd rule, line width=1pt, x=40pt,y=40pt,scale=0.5]
	\draw[fill=gray!10] (0,0) rectangle (2,1); 
	\draw[fill=gray!10] (0.5,2) rectangle (1.5,3);
	\draw[dotted] (1,0) -- (1,1);
	\draw[-, gray] (0.7,0.9) -- (0.7,2.1);
	\draw[-,magenta] (0,0.5) -- (-0.3,0.5) -- (-0.3, 2.5) -- (0.5,2.5);
	\draw[->,thin] (0.3,0) -- (0.3,-0.3);
	\draw[->,thin] (1.7,0) -- (1.7,-0.3);
	\draw[->,thin] (1.3,0) -- (1.3,-0.15) -- (0.7,-0.15) -- (0.7,0);
	\draw node at (0.4,0.5)[] {$p^\varepsilon_s$};
	\draw node at (1.6,0.5)[] {$p^\varepsilon_h$};
	\draw node at (1,2.5)[] {$N^\varepsilon$};

	\draw[->,thin,magenta] (1.3,0.6) to[out=150,in=30] (0.7,0.6);
	\draw[->,thin,magenta] (0.7,0.4) to[out=-30,in=-150] (1.3,0.4);
	

    \draw[fill=gray!10] (4.5,2) rectangle (5.5,3);
    \draw[fill=gray!10] (4.5,0) rectangle (5.5,1);
    \draw[-, gray, ultra thick] (5,0.9) -- (5,2.1);
    \draw[->,thin] (5,0) -- (5,-0.3);
    
        \draw node at (3,1.5)[] {\normalsize{$\varepsilon \to 0$}};
    \draw node at (5,2.5)[] {\normalsize{$N$}};
    \draw node at (5,0.5)[] {\normalsize{$P$}};
    
\end{tikzpicture}

	\caption{A sketch of the compartments in the fast-reaction systems \eqref{e1} and in the limiting systems \eqref{LimitSystem_xi_zero} and \eqref{LimitSystem_xi_positive}.}\label{fig:switch}
	\end{figure}
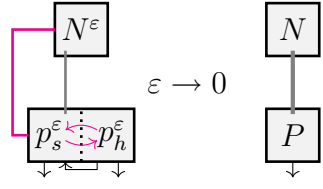
	
	\medskip

These formal limits have been proven rigorously in \cite{conforto2018reaction}. More precisely, the following results were obtained. 
\begin{theorem}[\cite{conforto2018reaction}]\label{thm1}
Assume that the initial data $N_{in}\in L^{\infty}(\Omega)$, $p_{h,in}, p_{s,in}\in L^{2+\delta}(\Omega)$ for some $\delta>0$ are non-negative. For $\eps>0$, let $(N^\eps,p_h^\eps,p_s^\eps)$ be the solution to system \eqref{e1}. Then, as $\eps\to 0$, the slow manifold converges to zero in the distributional sense up to a subsequence, i.e.
		\begin{equation}\label{conv_slow_manifold}
		\int_0^T\int_{\Omega}\psi \bra{\gamma p_h^\eps - \frac{\alpha p_s^\eps}{\xi p_s^\eps + 1}N^\eps} dxdt \to 0
		\end{equation}
		for all smooth function $\psi \in C^{2,1}(\Omega\times(0,T))$ such that $\nabla \psi \cdot \nu = 0$ on $\pa\Omega\times(0,T)$. Moreover, we have the following convergence results.
		\begin{enumerate}
			\item[(a)] If $\xi>0$,  then for $\eps\to 0$, up to a subsequence,
			\begin{equation*}
			N^\eps \to N \text{ strongly in } L^{q}(\Omega\times(0,T)), \quad p_h^\eps, p_s^{\eps} \to p_h, p_s \text{ strongly in } L^{2+\delta}(\Omega\times(0,T))
			\end{equation*}
			for all $q\in [1,\infty)$ and some $\delta>0$, and $(N,P:=p_h+p_s)$ is a very weak solution (see Definition \ref{WeakSolDef} in Section \ref{sec:3}) to the limiting system \eqref{LimitSystem_xi_positive}.
			\item[(b)] If $\xi = 0$, for $\eps\to 0$, then up to a subsequence,
			\begin{equation*}
			N^\eps \to N \text{ strongly in } L^{q}(\Omega\times(0,T)), \quad p_h^\eps, p_s^\eps \rightharpoonup p_h, p_s \text{ weakly in } L^{2+\delta}(\Omega\times(0,T)),
			\end{equation*}
			for all $q\in [1,\infty)$ and some $\delta>0$, and $(N,P:= p_h + p_s)$ is a very weak solution to the limiting system \eqref{LimitSystem_xi_zero}.
		\end{enumerate}
	\end{theorem}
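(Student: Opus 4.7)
My plan is to start with uniform-in-$\varepsilon$ a priori estimates on $(N^\varepsilon, p_s^\varepsilon, p_h^\varepsilon)$. Since the predation loss in the $N^\varepsilon$ equation is nonpositive, a comparison with the logistic ODE (or the maximum principle) gives $\|N^\varepsilon\|_{L^\infty(Q_T)} \le C$ uniformly. For the predators, the key is that summing the second and third equations of \eqref{e1} cancels the singular $1/\varepsilon$ contributions and yields
\begin{equation*}
\partial_t P^\varepsilon - \Delta\bra{d_2 p_s^\varepsilon + d_3 p_h^\varepsilon} = \Gamma p_h^\varepsilon - \mu P^\varepsilon, \qquad P^\varepsilon := p_s^\varepsilon + p_h^\varepsilon.
\end{equation*}
This is a scalar parabolic equation whose ``effective diffusion'' $(d_2 p_s^\varepsilon + d_3 p_h^\varepsilon)/P^\varepsilon$ lies in $[d_3, d_2]$, so Pierre's duality method, together with $p_{h,in}, p_{s,in} \in L^{2+\delta}(\Omega)$, yields a uniform $L^{2+\delta}(Q_T)$ bound on $p_s^\varepsilon$ and $p_h^\varepsilon$. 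A standard energy estimate on the $N^\varepsilon$ equation also gives $\nabla N^\varepsilon \in L^2(Q_T)$ uniformly.

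To prove \eqref{conv_slow_manifold}, I would isolate the singular term from the $p_h^\varepsilon$ equation,
\begin{equation*}
\frac{1}{\varepsilon}\bra{\frac{\alpha p_s^\varepsilon}{\xi p_s^\varepsilon + 1}N^\varepsilon - \gamma p_h^\varepsilon} = \partial_t p_h^\varepsilon - d_3 \Delta p_h^\varepsilon + \mu p_h^\varepsilon,
\end{equation*}
multiply by $\varepsilon\psi$ for a test function $\psi$ satisfying the Neumann condition (and, after a time cut-off, of compact support in $(0,T)$), and integrate by parts to shift all derivatives onto $\psi$:
\begin{equation*}
\int_0^T\!\!\int_\Omega \psi\bra{\gamma p_h^\varepsilon - \frac{\alpha p_s^\varepsilon}{\xi p_s^\varepsilon + 1}N^\varepsilon} dx\,dt = \varepsilon \int_0^T\!\!\int_\Omega p_h^\varepsilon\bra{\partial_t\psi + d_3\Delta\psi - \mu\psi}dx\,dt.
\end{equation*}
The right-hand side is bounded by $C\varepsilon\|\psi\|_{C^{2,1}}\|p_h^\varepsilon\|_{L^1(Q_T)}$, which vanishes as $\varepsilon\to 0$, giving the claimed distributional convergence. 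Removing the time cut-off and extending to general admissible $\psi$ is by density.

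Next, for the strong convergence of $N^\varepsilon$, the Aubin--Lions lemma applies with $N^\varepsilon$ bounded in $L^2(0,T;H^1(\Omega))$ and $\partial_t N^\varepsilon$ bounded in $L^2(0,T;H^{-1}(\Omega))$ (the reaction being pointwise controlled by $N^\varepsilon \in L^\infty$ and $p_s^\varepsilon \in L^{2+\delta}$), producing strong convergence in every $L^q(Q_T)$ with $q<\infty$; an analogous argument, based on the scalar equation for $P^\varepsilon$ written above, gives strong convergence of $P^\varepsilon$ as well. In case (a) with $\xi>0$, the map $s \mapsto \alpha s/(\xi s+1)$ is bounded and the slow-manifold equation \eqref{CriManiPPM2} admits the unique Lipschitz root $p_h=\varphi(N,P)$; combining this with the slow-manifold convergence and the strong convergence of $(N^\varepsilon, P^\varepsilon)$ upgrades $p_h^\varepsilon, p_s^\varepsilon$ to strong convergence in $L^{2+\delta}(Q_T)$. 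In case (b) with $\xi=0$, only weak convergence of the predators is available, but the strong convergence of $N^\varepsilon$ is enough to pass to the limit in the product $\alpha p_s^\varepsilon N^\varepsilon$, and the slow-manifold relation $\gamma p_h = \alpha p_s N$ together with $p_h + p_s = P$ then identifies the individual weak limits as explicit functions of $(N,P)$. Passing to the limit in the weak formulations of \eqref{e1} produces the weak formulations of \eqref{LimitSystem_xi_positive} and \eqref{LimitSystem_xi_zero}. The main obstacle I expect is precisely the strong compactness of $p_h^\varepsilon, p_s^\varepsilon$ in case (a): no direct uniform $H^1$-estimate is available for the individual predator densities, so one must transfer compactness from $(N^\varepsilon, P^\varepsilon)$ to $p_h^\varepsilon, p_s^\varepsilon$ through the slow manifold relation and the continuity of $\varphi$, a procedure that genuinely requires $\xi>0$ and breaks down in the Lotka--Volterra case.
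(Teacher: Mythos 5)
First, note that the paper does not prove Theorem \ref{thm1} at all: it is quoted from \cite{conforto2018reaction}, and the only ingredients of its proof that reappear here are the energy functional \eqref{ModiEnerFunc} (with $\beta=0$, $p=2$) and the $\eps$-uniform bounds of Lemma \ref{SecondResultCinziaPaper}. Measured against that strategy, your a priori estimates ($L^\infty$ bound on $N^\eps$ by comparison, $L^{2+\delta}(Q_T)$ bounds on $p_s^\eps,p_h^\eps$ by duality applied to the summed predator equation, $L^2_tH^1_x$ bound on $N^\eps$), your proof of the distributional convergence \eqref{conv_slow_manifold} by multiplying the $p_h^\eps$ equation by $\eps\psi$ and integrating by parts, the strong convergence of $N^\eps$ via Aubin--Lions, and essentially all of case (b) (weak limits of $p_s^\eps,p_h^\eps$, identification of the limit through the weak limit of the product $p_s^\eps N^\eps$ and the relation $\gamma p_h=\alpha p_s N$, $p_s+p_h=P$) are correct and follow the standard route.

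The genuine gap is in case (a), and it sits exactly where you flagged your own doubt. Two steps are unjustified. First, the strong compactness of $P^\eps$: the duality estimate gives no spatial derivative of $P^\eps$, and testing the equation $\partial_t P^\eps-\Delta(d_2p_s^\eps+d_3p_h^\eps)=\Gamma p_h^\eps-\mu P^\eps$ with $P^\eps$ produces the quadratic form $d_2|\nabla p_s^\eps|^2+(d_2+d_3)\nabla p_s^\eps\cdot\nabla p_h^\eps+d_3|\nabla p_h^\eps|^2$, whose determinant is $-(d_2-d_3)^2/4\le 0$; it is \emph{not} coercive for $d_2\neq d_3$, so no gradient bound on $P^\eps$ comes for free, and Aubin--Lions cannot be applied to $P^\eps$ ``analogously'' to $N^\eps$. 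Second, even granting strong convergence of $(N^\eps,P^\eps)$, transferring compactness to $p_h^\eps,p_s^\eps$ through the slow manifold and the Lipschitz inverse of $p_s\mapsto\gamma(P-p_s)-\tfrac{\alpha p_s}{\xi p_s+1}N$ requires that $\tfrac{\alpha p_s^\eps}{\xi p_s^\eps+1}N^\eps-\gamma p_h^\eps\to 0$ \emph{strongly} in $L^2(Q_T)$; the distributional convergence \eqref{conv_slow_manifold} you established is far too weak to identify strong limits. Both missing pieces come from the same tool, which is absent from your argument: the energy functional \eqref{ModiEnerFunc} with $p=2$, whose dissipation (Lemma \ref{LemmaEnergy}) simultaneously yields $\sup_{\eps}\|\nabla p_h^\eps\|_{L^2(Q_T)}<\infty$ (whence $\nabla p_s^\eps\in L^2$ by the duality trick, then compactness of $P^\eps$) and $\|\tfrac{\alpha p_s^\eps}{\xi p_s^\eps+1}N^\eps-\gamma p_h^\eps\|_{L^2(Q_T)}=O(\eps^{1/2})$. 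Closing this estimate uses that for $\xi>0$ the predation rate $\alpha p_s^\eps/(\xi p_s^\eps+1)\le\alpha/\xi$ is bounded, which gives $N^\eps$ maximal regularity, a $W^{1,\infty}$ bound and a positive lower bound (Lemma \ref{SecondResultCinziaPaper}(a)) --- none of which appear in your outline. Without this functional, part (a) as you have written it does not close.
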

	\begin{remark}\hfill
		\begin{itemize}
			\item[-] If $\xi>0$, the solution $(N,P)$ to the limiting system \eqref{LimitSystem_xi_positive} enjoys higher regularity $N \in W^{1,p}((0,T);L^p(\Omega))\cap L^p((0,T);W^{2,p}(\Omega))$ for all $T>0$, $p\ge 1$, and $p_h\in L^2(0,T;H^1(\Omega))$, $p_s\in L^1((0,T);W^{1,1}(\Omega))$.
			\item[-] If $\xi = 0$, one can show that $N\in W^{1,2+\delta}(0,T;L^{2+\delta}(\Omega))\cap L^{2+\delta}(0,T;W^{2,2+\delta}(\Omega))$ for some $\delta>0$. Moreover, if, in addition, we restrict to $d\in \{1,2\}$ and initial data $N_{in}, P_{in} = p_{h,in}+p_{s,in} \in C^{\kappa}(\bar \Omega)$, then we obtain
			\begin{equation*}
			N, P, \na N, \na P \in C^{\kappa}(\bar \Omega \times [0,T])
			\end{equation*}
			and
			\begin{equation*}
			\pa_t N, \,\pa_t P, \,\pa_{x_ix_j}N, \,\pa_{x_ix_j}P \in L^p(\Omega\times(0,T)), \quad \forall T>0,\; \forall p\in [1,\infty), \; i,j=1,\ldots, d.
			\end{equation*}
		\end{itemize}	
	\end{remark}
	
	\medskip
In the case $\xi>0$, thanks to the strong convergences in Theorem \ref{thm1} (a), we can in fact obtain the strong convergence of the slow manifold, up to a subsequence,
	\begin{equation*}
	\lim_{\eps \to 0}\left\|\frac{\alpha p_s^{\eps}}{\xi p_s^{\eps} + 1}N^\eps - \gamma p_h^\eps \right\|_{L^2(\Omega\times(0,T))} = 0
	\end{equation*}
instead of just a distributional convergence in \eqref{conv_slow_manifold}. Similarly, this strong convergence can be shown in the case $\xi = 0$ and low dimension $d\in \{1,2\}$, see \cite[Eq. (55)]{conforto2018reaction}. This improved convergence is much more subtle in the case $\xi =0$ and $d\ge 3$, since we have no strong convergence of $p_h^\eps$ and $p_s^{\eps}$. 

In this paper, by using a {\it modified energy method}, we show the strong convergence to zero of slow manifold in all dimensions without additional assumptions. This is inspired by the recent work \cite{tang2023rigorous}, where this technique was exploited to prove rigorous derivation of Michaelis--Menten kinetics for reaction--diffusion systems modelling enzyme reactions. In addition, we prove improved convergence results with respect to Theorem \ref{thm1}. More precisely, we obtain the following results.
	
	\begin{theorem}\label{thm2}
		Assume that the initial data $N_{in}\in W^{2,q}(\Omega)$ for some $q > d+2$, $p_{h,in}, p_{s,in}\in L^{2+\delta}(\Omega)$, for some $\delta>0$, are non-negative. For $\eps>0$, let $(N^\eps,p_h^\eps,p_s^\eps)$ be the solution to system \eqref{e1}.
		\begin{itemize}
			\item[(a)] If $\xi>0$, we have
			\begin{equation}\label{slow_conv1}
			\left\| \frac{\alpha p_s^\eps}{\xi p_s^\eps + 1}N^\eps - \gamma p_h^\eps\right\|_{L^2(\Omega\times(0,T))} = O(\eps^{1/2}), \quad \text{ as } \eps \to 0.
			\end{equation}
			Moreover, up to a subsequence as $\eps \to 0$,
			\begin{equation}\label{improved1}
			N^\eps, p_h^\eps, p_s^\eps \to N, p_h, p_s \quad \text{strongly in}\; L^q(\Omega\times(0,T))
			\end{equation}
			for all $q\in [1,\infty)$, and $(N, P:= p_h+p_s)$ is a weak solution of the limiting system \eqref{LimitSystem_xi_positive}.
			
			\item[(b)] If $\xi = 0$, we have
			\begin{equation}\label{slow_conv2}
			\|\alpha p_s^\eps N^\eps - \gamma p_h^\eps\|_{L^{4/3}(\Omega\times(0,T))} = O(\eps^{1/6}) \quad \text{ as } \quad \eps \to 0.
			\end{equation}
			Moreover, up to a subsequence as $\eps\to 0$,
			\begin{equation*}
			N^\eps \to N \quad\text{strongly in}\; L^q(\Omega\times(0,T))
			\end{equation*}
			for all $q\in [1,\infty)$. If the diffusion coefficients $d_2, d_3$ satisfies the additional constaint
			\begin{equation}\label{quasi-uniform-diffusion}
			\frac{d_2-d_3}{d_2+d_3} < \frac{1}{C^{\mathsf{MR}}_{q_0'}} \quad \text{ for some } \quad q_0' < 5/4,
			\end{equation}
			where $C^{\mathsf{MR}}_{q_0'}$ is the constant in the maximal regularity result (see Lemma \ref{LambertonLemma} in the Appendix), then we also have that
			\begin{equation}\label{improved2}
			p_h^\eps, p_s^\eps \to p_h, p_s \quad \text{strongly in}\; L^q(\Omega\times(0,T))
			\end{equation}
			up to a subsequence as $\eps\to 0$, for all $q\in [1,q_0)$ with $q_0 = q_0'/(q_0' - 1)$. The limit functions $(N,P:= p_h+p_s)$ is a very weak solution of the limiting system \eqref{LimitSystem_xi_zero}.
			
			%
		\end{itemize}
	\end{theorem}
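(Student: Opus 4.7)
The strategy is a modified energy inspired by \cite{tang2023rigorous}. Introduce
\[
\mathcal{E}^\eps(t) := \frac{\gamma}{2}\int_\Omega (p_h^\eps)^2\,dx + \int_\Omega \Psi(p_s^\eps, N^\eps)\,dx,\qquad \Psi(s,n) := \int_0^s \frac{\alpha u\,n}{\xi u + 1}\,du,
\]
tailored so that $\pa_{p_s}\Psi = F(p_s,N) := \frac{\alpha p_s N}{\xi p_s + 1}$. Testing the $p_h^\eps$-equation of \eqref{e1} against $\gamma p_h^\eps$ and the $p_s^\eps$-equation against $F(p_s^\eps, N^\eps)$, the fast-reaction contributions combine precisely to $-\frac{1}{\eps}\int_\Omega (R^\eps)^2\,dx$, where $R^\eps := \gamma p_h^\eps - F(p_s^\eps, N^\eps)$. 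Integrating over $Q_T$ produces an identity of the form
\[
\frac{1}{\eps}\intQT (R^\eps)^2 + \Phi^\eps = \mathcal{E}^\eps(0) - \mathcal{E}^\eps(T) + \mathcal{R}^\eps,
\]
with $\Phi^\eps \geq 0$ gathering the nonnegative gradient and mass contributions, and $\mathcal{R}^\eps$ a remainder containing $\intQT \pa_N\Psi \cdot \pa_t N^\eps$, the cross term $\intQT p_s^\eps \na N^\eps \cdot \na p_s^\eps$ (rewritten by integration by parts as $-\frac{1}{2}\intQT (p_s^\eps)^2\Delta N^\eps$), and bounded low-order products.

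\textbf{Case $\xi>0$.}
Here $F\leq \alpha\|N^\eps\|_\infty/\xi$ is uniformly bounded, so the forcing of the $N^\eps$-equation lies in $L^\infty$; combined with the assumption $N_{in} \in W^{2,q}(\Omega)$ for $q>d+2$, maximal regularity yields $\pa_t N^\eps,\Delta N^\eps \in \LQ{q}$ uniformly in $\eps$. Together with the uniform $L^2(0,T;H^1(\Omega))$-bound on $p_h^\eps$ from Theorem~\ref{thm1} and a standard $L^2$-energy estimate on $P^\eps$ (via $\pa_t P^\eps - d_2\Delta P^\eps = (d_3 - d_2)\Delta p_h^\eps - \mu P^\eps + \Gamma p_h^\eps$ and Young's inequality), one also gets a uniform $\LQ{2}$-bound on $\na p_s^\eps$. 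Every term in $\mathcal{R}^\eps$ is then uniformly bounded in $\eps$, so $\intQT (R^\eps)^2 = O(\eps)$, giving \eqref{slow_conv1}. The improved strong convergence \eqref{improved1} then follows from Aubin--Lions compactness applied to $N^\eps$ and $P^\eps$ in the enhanced $L^q$-spaces, together with the Lipschitz parametrization $(N,P)\mapsto(p_h,p_s)$ on the slow manifold, which is smooth for $\xi>0$.

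\textbf{Case $\xi=0$ and main obstacle.}
Now $F = \alpha p_s^\eps N^\eps$ is no longer uniformly bounded, and maximal regularity for $N^\eps$ yields only $\Delta N^\eps \in \LQ{2+\delta}$, since the forcing itself belongs only to $L^{2+\delta}$. The troublesome remainder $\intQT(p_s^\eps)^2\Delta N^\eps$ is estimated via H\"older paired with Gagliardo--Nirenberg: the factor $\|p_s^\eps\|_{L^{2(2+\delta)/(1+\delta)}}^2$ is split between the uniform $L^{2+\delta}$-bound and the good term $\intQT N^\eps |\na p_s^\eps|^2$ contained in $\Phi^\eps$, with Young's inequality absorbing the gradient factor back into the left-hand side. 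The resulting self-improving estimate yields $\intQT(R^\eps)^2 \leq C\eps^{1/3}$, whence $\|R^\eps\|_{\LQ{4/3}} = O(\eps^{1/6})$ by H\"older, proving \eqref{slow_conv2}. For the strong convergence \eqref{improved2} of $p_h^\eps, p_s^\eps$ separately, we substitute $p_s^\eps = P^\eps - p_h^\eps$ into the $p_h^\eps$-equation and apply maximal regularity in $L^{q_0}$; the constraint \eqref{quasi-uniform-diffusion} is the sharp threshold making the associated fixed-point map a contraction, producing uniform $L^{q_0}$-bounds that upgrade to strong convergence by compactness. Passing to the limit in the weak formulation of \eqref{e1} then delivers the (very) weak solution to \eqref{LimitSystem_xi_zero}. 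The main technical difficulty is precisely this Gagliardo--Nirenberg/Young closure in the Lotka--Volterra regime, which accounts for the weakening from $\eps^{1/2}$ in $L^2$ (available for $\xi>0$) down to $\eps^{1/6}$ in the larger space $\LQ{4/3}$.
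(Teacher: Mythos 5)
Your treatment of case (a) is essentially the paper's argument: your energy $\mathcal{E}^\eps$ is exactly the paper's modified functional \eqref{ModiEnerFunc} specialised to $p=2$, $\beta(\eps)=0$, and for $\xi>0$ this does yield \eqref{slow_conv1}. (One caveat: to get \eqref{improved1} for \emph{all} $q\in[1,\infty)$ you need $\eps$-uniform $L^p(Q_T)$ bounds on $p_h^\eps,p_s^\eps$ for every $p$; the paper extracts these from the same energy inequality run at general $p>1$, where $\iint_{Q_T}(p_h^\eps)^p$ sits on the good side, plus a duality argument to transfer the bound to $p_s^\eps$. Your $p=2$-only energy does not provide this, and "Aubin--Lions in the enhanced $L^q$-spaces" presupposes exactly the bounds that are missing.)

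The genuine gap is in case (b). Your closure of the term $\iint_{Q_T}(p_s^\eps)^2|\Delta N^\eps|$ requires, via Gagliardo--Nirenberg, control of $\|\na p_s^\eps\|_{L^2(Q_T)}$; but the only gradient term available on the good side of the quadratic energy is the \emph{degenerate} quantity $\iint_{Q_T}N^\eps|\na p_s^\eps|^2$. For $\xi=0$ and $d\ge 3$ there is no $\eps$-uniform positive lower bound on $N^\eps$ (this is precisely the obstruction the paper identifies), so this weighted term cannot be converted into $\iint_{Q_T}|\na p_s^\eps|^2$ and the Young absorption cannot be run. Relatedly, the intermediate claim $\iint_{Q_T}(R^\eps)^2\le C\eps^{1/3}$ is not derived: a bounded remainder would give rate $\eps$, and your argument produces no mechanism for the specific loss to $\eps^{1/3}$ — the exponents $4/3$ and $1/6$ are asserted rather than obtained. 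The paper's actual mechanism is different and is the key idea you are missing: it keeps the shift $\beta(\eps)>0$ in the $\xi=0$ case and takes $p=1^+$ in \eqref{ModiEnerFunc}, so that the dangerous negative powers $(N^\eps+\beta(\eps))^{p-2}$ and $(N^\eps+\beta(\eps))^{p-3}$ are bounded by $\beta(\eps)^{p-2}$ and $\beta(\eps)^{p-3}$; choosing $\beta(\eps)=\eps^{1/(4-p)}$ balances the resulting loss against the $1/\eps$ prefactor, and a mean-value/H\"older step then yields $\|\alpha p_s^\eps N^\eps-\gamma p_h^\eps\|_{L^{4/(4-p)}_{x,t}}\lesssim \eps^{1/(2(4-p))}$, which gives \eqref{slow_conv2} exactly as $p\to1^+$. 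Finally, for \eqref{improved2} the paper does not run a fixed-point argument on the $p_h^\eps$-equation: it applies the improved duality estimate (Lemma \ref{ImprovedDuaEst}) to the inequality satisfied by $p_s^\eps+p_h^\eps$, for which \eqref{quasi-uniform-diffusion} is precisely the hypothesis, and then still needs a separate, rather delicate estimate (testing the $N^\eps$-equation with $(N^\eps)^{-\vartheta}$ to control $|\na N^\eps|^2/(N^\eps)^{1+\vartheta}$) to obtain the $L^2_tH^1_x$ compactness of $P^\eps$; your sketch omits this step entirely.
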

	\begin{remark}\hfill
		\begin{itemize}
		\item[-] The convergence of slow manifolds in \eqref{slow_conv1} and \eqref{slow_conv2} can be obtained in a higher $L^p$-norm with the cost of having lower convergence rate as $\eps \to 0$.
		\end{itemize}
	\end{remark}
	
Theorem \ref{thm2} improves the results of Theorem \ref{thm1} by showing the strong convergence of the slow manifold \eqref{slow_conv2} in the case $\xi = 0$ \emph{in all dimensions}, the strong convergence of $p_h^\eps, p_s^\eps$ in \eqref{improved1} for all $q\in [1,\infty)$, and the conditional strong convergence of $p_h^\eps, p_s^\eps$ in \eqref{improved2} under \eqref{quasi-uniform-diffusion}.
The core idea of our technique to prove Theorem \ref{thm2} is to consider a \textit{modified energy function}
	\begin{equation}\label{ModiEnerFunc}
	H^{\eps}(t):= \int_{\Omega}(N^\eps(x,t)+\beta(\eps))^{p-1}\bra{\int_{0}^{p_s^\eps(x,t)}\bra{\frac{\alpha r}{\xi r + 1}}^{p-1}dr}dx + \frac{1}{p}\int_{\Omega}\gamma^{p-1}(p_h^\eps)^pdx
	\end{equation}
	for $p>1$, where the function $\beta(\eps)\geq 0$ satisfies $\lim_{\eps\to 0}\beta(\eps) = 0$.  When $\xi = 0$, by taking $\beta = 0$ and $p=2$,  the energy function \eqref{ModiEnerFunc} reduces, up to a multiplication factor,  to the function used in \cite[Page 15]{conforto2018reaction}. The main advantage of having the modification $N^\eps(x,t)+\beta(\eps)$ is that it circumvents the problem of lacking a lower bound of $N^{\eps}(x,t)$ in dimensions $d\ge 3$. We also remark that this technique is suitable to treat both cases $\xi>0$ and $\xi=0$,  corresponding to Lotka--Volterra and Holling-type II terms in the fast-reaction systems, respectively, and therefore it consitutes, to some extent, a \textit{unified} approach. 
	
	\medskip
	
The paper is organised as follows.  In Section \ref{EnergyFunctional}, we prove a priori estimates using the modified energy functional $H^\varepsilon(\varepsilon)$ and the $\varepsilon$-uniform for the solution., while Section \ref{sec:3} is devoted to improved convergence results in both cases $\xi>0$ and $\xi=0$.  In the Appendix \ref{appendix},  two useful Lemmas are reported for the reader's convenience.
	
	\medskip
	
{\bf Notation:} We will use the following notation throughout the paper:
\begin{itemize}
\item[-] For fixed $T>0$ but arbitrary, and $q\in [1,\infty]$, we write 
	\begin{equation*}
	Q_T:= \Omega\times(0,T)\quad \textnormal{and} \quad L^q_{x,t}:= L^q(Q_T).
	\end{equation*}
\item[-] We write $u \in L^{q\pm}_{x,t}$ if there is some $\kappa>0$ such that $u\in L^{q\pm \kappa}_{x,t}$.
\item[-] We indicate with $C$ the constants depending on the data of the problem.
\item[-] For two quantities $\textbf{X}$ and $\textbf{Y}$, we write $\textbf{X}\lesssim \textbf{Y}$ if there exists a constant $C$ depending on the data of the problem but {\it independent of $\eps>0$} such that $\textbf{X}\le C\textbf{Y}$.
\item[-] For integrals on $\Omega$ or on $Q_T$, we suppress the differentials $dx$ and $dxdt$ respectively, for the sake of presentation.
	\end{itemize}

\section{Energy and $\varepsilon$-uniform estimates} \label{EnergyFunctional}

In order to rigorously prove the convergence to the slow manifold as well as the convergence to the limiting system given in Theorem \ref{thm2}, we first need a priori estimates via the modified  energy functional $H^\varepsilon(t)$ defined by \eqref{ModiEnerFunc},  provided by Lemma \ref{LemmaEnergy}, and then $\varepsilon$-uniform estimates for the solution,  achieved in Lemma \ref{SecondResultCinziaPaper}.

\begin{lemma} \label{LemmaEnergy} 
Let $p>1$ and $(N^\varepsilon, p_s^\varepsilon,p_h^\varepsilon)$ be the solution to system \eqref{e1} for $\varepsilon>0$.

\begin{itemize}

\item[-] If $\xi>0$, then 
\begin{align*}
 \iint_{Q_T} &\Big( ( p_h^\varepsilon)^{p}    +     (p_h^\varepsilon)^{p-2} |\nabla p_h^\varepsilon|^2 \Big) \nonumber\\
& \hspace*{1.35cm} + \frac{1}{\varepsilon} \iint_{Q_T} \bigg( \dfrac{\alpha  p_s^\varepsilon}{\xi p_s^\varepsilon+1}N^\varepsilon  - \gamma  p_h^\varepsilon \bigg) \bigg( \bigg( \dfrac{\alpha  p_s^\varepsilon}{\xi p_s^\varepsilon+1}N^\varepsilon  \bigg)^{p-1} - (\gamma  p_h^\varepsilon)^{p-1} \bigg)      \nonumber\\
&   \lesssim   H^\varepsilon(0) + \iint_{Q_T} \Big( p_s^\varepsilon (N ^\varepsilon )^{p-2} |\partial_t N^\varepsilon |     +   (N ^\varepsilon)^{p-1} p_h^\varepsilon   
+    (\xi p_s^\varepsilon+1)^{2}  (N^\varepsilon  )^{p-3} |\nabla  N^\varepsilon |^2 \Big) . 
\end{align*}

\item[-] If $\xi=0$, then
\begin{align*}
\iint_{Q_T}& \Big(   (p_h^\varepsilon)^{p-2} |\nabla p_h^\varepsilon|^2 +  (N^\varepsilon + \beta(\varepsilon))^{p-1}  (p_s^\varepsilon)^{p-2}      |\nabla p_s^\varepsilon|^2 \Big) \nn \\
& \hspace*{1.35cm} + \frac{1}{\varepsilon} \iint_{Q_T} ( \alpha  p_s^\varepsilon N^\varepsilon  - \gamma  p_h^\varepsilon  ) \Big( ( \alpha  p_s^\varepsilon (N^\varepsilon + \beta(\varepsilon)) )^{p-1} - (\gamma  p_h^\varepsilon)^{p-1} \Big)      \nonumber\\
&    \lesssim   H^\varepsilon (0)   +        \iint_{Q_T}   ( N^\varepsilon + \beta(\varepsilon)  )^{p-1} (p_s^\varepsilon )^{p-1}   p_h^\varepsilon   
 \nn\\
&    \hspace*{1.35cm}    +        \iint_{Q_T} \Big( (p_s^\varepsilon)^p (N^\varepsilon + \beta(\varepsilon))^{p-2} \partial_t N^\varepsilon  +   ( p_s^\varepsilon)^{p}  (N^\varepsilon + \beta(\varepsilon))^{p-3}    |\nabla N^\varepsilon |^2 \Big) .  
\end{align*} 
\end{itemize}
In both cases, the hidden constants only depend on $\alpha,\xi,\mu,\Gamma,d_2,d_3,$ and $p$.
\end{lemma}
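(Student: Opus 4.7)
The plan is to derive both estimates by time-differentiating $H^\eps(t)$, substituting the equations of \eqref{e1}, and tracking where each resulting term must land. Writing the first integrand as $(N^\eps+\beta(\eps))^{p-1}F(p_s^\eps)$ with $F(z):=\int_0^z(\alpha r/(\xi r+1))^{p-1}\,dr$, one has $F'(z)=(\alpha z/(\xi z+1))^{p-1}$ and $F''(z)\ge 0$. I would take $\beta(\eps)\equiv 0$ when $\xi>0$ and $\beta(\eps)>0$ when $\xi=0$. The chain rule produces three contributions, one per time derivative; for the $\partial_t p_s^\eps$ and $\partial_t p_h^\eps$ contributions I would integrate the Laplacians by parts against the homogeneous Neumann conditions, producing three structural pieces: a pointwise positive dissipation from $p_h^\eps$-diffusion, a pointwise positive dissipation $d_2(N^\eps+\beta)^{p-1}F''(p_s^\eps)|\nabla p_s^\eps|^2$ from $p_s^\eps$-diffusion, and a sign-indefinite cross term $-d_2(p-1)(N^\eps+\beta)^{p-2}F'(p_s^\eps)\,\nabla N^\eps\cdot\nabla p_s^\eps$.

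The two $1/\eps$ pieces from $\partial_t p_s^\eps$ and $\partial_t p_h^\eps$ combine, after factoring, into
\[
\frac{1}{\eps}\iint_{Q_T}\bigl(f-g\bigr)\bigl[(N^\eps+\beta)^{p-1}F'(p_s^\eps)-g^{p-1}\bigr],
\]
with $f=\alpha p_s^\eps N^\eps/(\xi p_s^\eps+1)$ and $g=\gamma p_h^\eps$. For $\xi>0$ with $\beta\equiv 0$ the bracket collapses to $f^{p-1}-g^{p-1}$, giving the monotone quantity in the statement; for $\xi=0$ the identity $\alpha^{p-1}(N^\eps+\beta)^{p-1}(p_s^\eps)^{p-1}=(\alpha p_s^\eps(N^\eps+\beta))^{p-1}$ yields the required bracket verbatim. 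The mortality contributions $-\mu p_s^\eps$, $-\mu p_h^\eps$ appear with favourable sign and can be discarded; the $-\mu p_h^\eps$ piece additionally generates the $\iint(p_h^\eps)^p$ summand kept on the left when $\xi>0$.

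The main analytic obstacle is the cross term. I would estimate it via Young's inequality with a small parameter so that one piece is absorbed by the $F''$-dissipation and the other becomes a right-hand side term with exactly the required form. For $\xi>0$, a direct computation of $F'(z)^2/F''(z)$ using the explicit expressions shows the Young remainder has the shape $(\xi p_s^\eps+1)^2(N^\eps)^{p-3}|\nabla N^\eps|^2$, up to constants depending only on $\alpha,\xi,p$; for $\xi=0$ the same computation gives $(p_s^\eps)^p(N^\eps+\beta)^{p-3}|\nabla N^\eps|^2$. In the case $\xi=0$ only a fraction of the $F''$-dissipation is used, leaving the remainder $(N^\eps+\beta)^{p-1}(p_s^\eps)^{p-2}|\nabla p_s^\eps|^2$ on the left, in agreement with the statement; in the case $\xi>0$ the whole $F''$-dissipation is spent on the Young split and the left side retains only the $p_h^\eps$-diffusion term.

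Finally, the $\partial_t N^\eps$ contribution equals $(p-1)\int(N^\eps+\beta)^{p-2}F(p_s^\eps)\partial_t N^\eps$; using $F(z)\lesssim z$ when $\xi>0$ and $F(z)=\alpha^{p-1}z^p/p$ when $\xi=0$ produces exactly the $\partial_t N^\eps$-terms listed on the right. The linear source $\Gamma p_h^\eps$ in $\partial_t p_s^\eps$ gives the remaining $(N^\eps)^{p-1}p_h^\eps$ or $(N^\eps+\beta)^{p-1}(p_s^\eps)^{p-1}p_h^\eps$ summand. Integrating from $0$ to $T$ and discarding the non-negative $H^\eps(T)$ closes the estimate. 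The delicate point is the role of the shift $\beta(\eps)>0$ in the $\xi=0$ case: the exponent $(N^\eps+\beta)^{p-3}$ in the Young remainder would blow up along any sequence where $N^\eps$ approaches $0$, so the shift is precisely the device that allows the argument to proceed without an $\eps$-uniform pointwise lower bound on $N^\eps$ in dimension $d\ge 3$.
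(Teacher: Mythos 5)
Your proposal is correct and follows essentially the same route as the paper: differentiate $H^\eps$, substitute the equations, integrate the Laplacians by parts, and use Young's inequality to absorb the cross term $\nabla N^\eps\cdot\nabla p_s^\eps$ into the $p_s^\eps$-dissipation, with the remainder $F'(p_s^\eps)^2/F''(p_s^\eps)$ producing exactly the stated $|\nabla N^\eps|^2$-terms. The identification of every right-hand-side term (including the bound $p_s^\eps/(\xi p_s^\eps+1)\le 1/\xi$ behind the $(\xi p_s^\eps+1)^2$ factor when $\xi>0$) and the role of the shift $\beta(\eps)$ match the paper's argument.
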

 
\begin{proof}  Direct computations show 
\begin{align*}
\frac{dH^\varepsilon}{dt} =\,& -  \frac{1}{\varepsilon} \int_{\Omega} \bigg( \dfrac{\alpha  p_s^\varepsilon}{\xi p_s^\varepsilon+1}N^\varepsilon  - \gamma  p_h^\varepsilon \bigg) \bigg( \bigg( \dfrac{\alpha  p_s^\varepsilon}{\xi p_s^\varepsilon+1} (N^\varepsilon + \beta(\varepsilon)) \bigg)^{p-1} - (\gamma  p_h^\varepsilon)^{p-1} \bigg)   \nonumber\\
\,&+ (p-1) \int_\Omega   \int_0^{p_s^\varepsilon(x,t)}  \bigg( \frac{\alpha r}{\xi r +1}  \bigg)^{p-1}dr\, (N^\varepsilon + \beta(\varepsilon))^{p-2} \partial_t  N^\varepsilon   \nonumber\\
\,&+  \int_\Omega    \bigg( \frac{\alpha p_s^\varepsilon }{\xi p_s^\varepsilon +1}  (N^\varepsilon + \beta(\varepsilon))  \bigg)^{p-1}   (  -\mu p_s^\varepsilon + \Gamma p_h^\varepsilon )  + \int_\Omega  (\gamma p_h^\varepsilon)^{p-1} ( - \mu p_h^\varepsilon)   \nonumber\\
\,&+ d_1 \int_\Omega    \bigg( \frac{\alpha p_s^\varepsilon }{\xi p_s^\varepsilon +1}  (N^\varepsilon + \beta(\varepsilon)) \bigg)^{p-1}    \Delta p_s^\varepsilon  + d_2   \int_\Omega  (\gamma p_h^\varepsilon)^{p-1}   \Delta p_h^\varepsilon    .
\end{align*}
Applying integration by parts gives 
\begin{align*}
 \int_\Omega    \bigg( \frac{\alpha p_s^\varepsilon }{\xi p_s^\varepsilon +1}  &(N^\varepsilon + \beta(\varepsilon)) \bigg)^{p-1}    \Delta p_s^\varepsilon     \nonumber\\
& =  - \alpha (p-1) \int_\Omega (N^\varepsilon + \beta(\varepsilon))^{p-1}  \frac{(\alpha p_s^\varepsilon)^{p-2} }{(\xi p_s^\varepsilon +1)^p}    |\nabla p_s^\varepsilon|^2 \nn\\
&\qquad   - (p-1) \int_\Omega (N^\varepsilon + \beta(\varepsilon) )^{p-2}   \bigg( \frac{\alpha p_s^\varepsilon }{\xi p_s^\varepsilon +1}  \bigg)^{p-1}  \nabla p_s^\varepsilon \nabla  N^\varepsilon   \nonumber\\
& \le - \frac{\alpha (p-1)}{2} \int_\Omega (N^\varepsilon + \beta(\varepsilon))^{p-1}  \frac{(\alpha p_s^\varepsilon)^{p-2} }{(\xi p_s^\varepsilon +1)^p}    |\nabla p_s^\varepsilon|^2 \nn\\
&\qquad +  \frac{ (p-1)}{2\alpha} \int_\Omega (    N^\varepsilon + \beta(\varepsilon) )^{p-3}  \frac{(\alpha p_s^\varepsilon)^{p} }{ (\xi p_s^\varepsilon +1)^{p-2} }  |\nabla N ^\varepsilon |^2   .
\end{align*}
Therefore,
\begin{align*}
\frac{dH^\varepsilon}{dt} + \,& \frac{1}{\varepsilon} \int_{\Omega} \bigg( \dfrac{\alpha  p_s^\varepsilon}{\xi p_s^\varepsilon+1}N^\varepsilon  - \gamma  p_h^\varepsilon \bigg) \bigg( \bigg( \dfrac{\alpha  p_s^\varepsilon}{\xi p_s^\varepsilon+1} (N^\varepsilon + \beta(\varepsilon)) \bigg)^{p-1} - (\gamma  p_h^\varepsilon)^{p-1} \bigg)  \nonumber\\
\,&+  \int_\Omega (N^\varepsilon + \beta(\varepsilon))^{p-1}  \frac{(  p_s^\varepsilon)^{p-2}}{(\xi p_s^\varepsilon +1)^{p}}     |\nabla p_s^\varepsilon|^2   +   \int_\Omega (p_h^\varepsilon)^{p-2} |\nabla p_h^\varepsilon|^2   \nonumber\\
\,&+  \int_\Omega  (N^\varepsilon + \beta(\varepsilon) )^{p-1}     \frac{(p_s^\varepsilon)^p }{(\xi p_s^\varepsilon +1)^{p-1}}   +   \int_\Omega  ( p_h^\varepsilon)^{p}    \nonumber\\
\le \,& \int_\Omega   \int_0^{p_s^\varepsilon(x,t)}  \bigg( \frac{ r}{\xi r +1}  \bigg)^{p-1} dr\, (N^\varepsilon + \beta(\varepsilon))^{p-2} \partial_t  N^\varepsilon     \nonumber\\ 
\,&+ \int_\Omega    \bigg( \frac{ p_s^\varepsilon }{\xi p_s^\varepsilon +1}  (N^\varepsilon + \beta(\varepsilon)) \bigg)^{p-1}    p_h^\varepsilon   
+  \int_\Omega (    N^\varepsilon + \beta(\varepsilon) )^{p-3}    \frac{( p_s^\varepsilon)^{p} }{(\xi p_s^\varepsilon +1)^{p-2}}    |\nabla N^\varepsilon |^2   , 
\end{align*}
which directly show the desired estimates, where we choose $\beta(\varepsilon)=0$ in the case $\xi>0$.
\end{proof}

Thanks to  the proof of Theorems 1.1 and 1.2 \cite{conforto2018reaction} and the heat regularisation \cite[Lemma 2.5]{tang2023rigorous}, we directly obtain the following $\varepsilon$-uniform estimates.

\begin{lemma} \label{SecondResultCinziaPaper} Assume that the initial data satisfy the assumptions of Theorem \ref{thm2}. Let $(N^\varepsilon,p_s^\varepsilon,p_h^\varepsilon)$ be the solution to \eqref{e1} for $\varepsilon>0$.   
\begin{itemize}
\item[a)] If $\xi>0$, then for any $q\in [1,\infty)$
\begin{align*}
& \hspace*{0.4cm} \sup_{\varepsilon>0} \left( \|p_s^\varepsilon\|_{L^{2}_tH^1_{x}} + \| p_h^\varepsilon\|_{L^{2}_tH^1_{x}} +   \|N^\varepsilon\|_{L^\infty_{t}W^{1,\infty}_x}   + \|\partial_t N^\varepsilon\|_{L^q_{x,t}} + \|\Delta N^\varepsilon\|_{L^q_{x,t}} \right) \le C .
\end{align*} 
  Moreover, 
there exists a constant $ m>0$ independent of $\varepsilon$ such that 
\begin{align*}
 \esssup_{(x,t)\in Q_T} |N^\varepsilon(x,t)|  \ge m .
\end{align*}

\item[b)] If $\xi=0$, then 
\begin{align*}
& \hspace*{0.4cm} \sup_{\varepsilon>0} \left( \|p_s^\varepsilon\|_{L^{2+}_{x,t}}   + \|p_h^\varepsilon\|_{L^{2+}_{x,t}}   +  \|N^\varepsilon\|_{L^\infty_{x,t}}  + \|\nabla N^\varepsilon\|_{L^{4+}_{x,t}}  + \|\partial_t N^\varepsilon\|_{L^{2+}_{x,t}} + \|\Delta N^\varepsilon\|_{L^{2+}_{x,t}} \right) \le C .
\end{align*}
\end{itemize}
\end{lemma}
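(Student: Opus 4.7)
The plan is to assemble these bounds by combining the classical arguments from \cite{conforto2018reaction} with parabolic maximal regularity \cite{tang2023rigorous}, proceeding through a short bootstrap chain.

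First, I would obtain an $\varepsilon$-uniform $L^\infty$ bound on $N^\varepsilon$ valid in both cases: the predation loss in the first equation of \eqref{e1} is non-positive, so the comparison principle against the logistic ODE $\dot u = r_0(1-\eta u)u$ gives $N^\varepsilon \le \max\{\|N_{in}\|_{L^\infty},1/\eta\}$ uniformly in $\varepsilon$. Next, I would control the total predator $P^\varepsilon := p_s^\varepsilon + p_h^\varepsilon$: summing the second and third equations of \eqref{e1} kills the $1/\varepsilon$ term and produces
\begin{equation*}
\partial_t P^\varepsilon - d_2\Delta p_s^\varepsilon - d_3\Delta p_h^\varepsilon = -\mu P^\varepsilon + \Gamma p_h^\varepsilon,
\end{equation*}
so integration in space and Gr\"onwall give a uniform bound on $\|P^\varepsilon\|_{L^\infty_tL^1_x}$. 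In case (b), upgrading this to the $L^{2+}_{x,t}$ estimate proceeds via the duality (Pierre) method applied to the subsystem, verbatim from \cite{conforto2018reaction}; in case (a), the additional fact that $\alpha p_s^\varepsilon/(\xi p_s^\varepsilon+1) \le \alpha/\xi$ is \textit{pointwise} bounded lets one test the equations with $p_s^\varepsilon$ and $p_h^\varepsilon$ (this is Lemma \ref{LemmaEnergy} with $p=2$) and read off the $L^2_tH^1_x$ bounds.

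The regularity estimates on $N^\varepsilon$ then fall out of parabolic maximal regularity applied to the first equation. If $\xi>0$, the reaction $r_0(1-\eta N^\varepsilon)N^\varepsilon - \alpha p_s^\varepsilon N^\varepsilon/(\xi p_s^\varepsilon+1)$ is uniformly bounded in $L^\infty_{x,t}$, so for every $q<\infty$ the $W^{2,q}$ assumption on $N_{in}$ together with maximal $L^q$ regularity yields $\partial_tN^\varepsilon,\Delta N^\varepsilon \in L^q_{x,t}$, and the Sobolev embedding $W^{1,q}((0,T);L^q)\cap L^q((0,T);W^{2,q})\hookrightarrow L^\infty((0,T);W^{1,\infty}(\Omega))$, valid for $q>d+2$, gives the desired $L^\infty_tW^{1,\infty}_x$ bound. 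If $\xi=0$, the reaction is only controlled by $C(1+p_s^\varepsilon)\in L^{2+}_{x,t}$, so maximal regularity gives $\partial_tN^\varepsilon,\Delta N^\varepsilon\in L^{2+}_{x,t}$, and an interpolation between $N^\varepsilon\in L^\infty_{x,t}$ and $\Delta N^\varepsilon \in L^{2+}_{x,t}$ (or the heat-regularisation inequality of \cite[Lemma 2.5]{tang2023rigorous}) produces $\nabla N^\varepsilon\in L^{4+}_{x,t}$.

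For the quantitative lower bound $\esssup_{Q_T}N^\varepsilon \ge m$ in case (a), I would argue by contradiction: if along some subsequence $\esssup N^\varepsilon \to 0$, then the predation loss vanishes uniformly in the first equation and the logistic term forces $\partial_t\bar N \ge (r_0/2)\bar N$ in a weak sense for the spatial average $\bar N$, contradicting $\bar N \to 0$ once $T$ is fixed. The main obstacle I expect is not any single estimate but rather the careful bookkeeping to make sure every constant is genuinely \textit{$\varepsilon$-uniform} — in particular that the Pierre duality argument handles the $1/\varepsilon$ terms by antisymmetric cancellation in the sum $p_s^\varepsilon + p_h^\varepsilon$, and that the maximal regularity constants in $L^q$ for the large $q>d+2$ needed in case (a) do not blow up when the coefficient structure of \eqref{e1} is kept fixed.
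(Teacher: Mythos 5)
The paper itself does not write out a proof of this lemma: it is obtained in two lines by citing the proofs of Theorems 1.1 and 1.2 of \cite{conforto2018reaction} together with the heat regularisation of \cite[Lemma 2.5]{tang2023rigorous}. Your reconstruction --- comparison with the logistic ODE for $\|N^\varepsilon\|_{L^\infty}$, cancellation of the $1/\varepsilon$ terms in the sum $p_s^\varepsilon+p_h^\varepsilon$ followed by the improved duality estimate for the $L^{2+}_{x,t}$ bounds when $\xi=0$, maximal $L^q$ regularity for the $N^\varepsilon$-equation, and the interpolation $\|\nabla N^\varepsilon\|_{L^{2r}_{x,t}}^2\lesssim\|N^\varepsilon\|_{L^\infty_{x,t}}\|\Delta N^\varepsilon\|_{L^{r}_{x,t}}$ for the $L^{4+}_{x,t}$ gradient bound --- is exactly the route those references take, so the displayed norm estimates are in order. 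One imprecision: in case (a) the $L^2_tH^1_x$ bounds do not follow from testing the equations with $p_s^\varepsilon$ and $p_h^\varepsilon$ themselves, since the $1/\varepsilon$ terms then do not cancel; Lemma \ref{LemmaEnergy} tests with $\bigl(\tfrac{\alpha p_s^\varepsilon}{\xi p_s^\varepsilon+1}N^\varepsilon\bigr)^{p-1}$ and $(\gamma p_h^\varepsilon)^{p-1}$ precisely so that the singular terms combine into a single nonnegative product that can be discarded.

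The genuine gap is in your treatment of the lower bound on $N^\varepsilon$ in case (a). The claim that if $\esssup N^\varepsilon\to 0$ then ``the predation loss vanishes uniformly'' and the logistic term forces $\partial_t\bar N\ge(r_0/2)\bar N$ is false: the loss term obeys $\tfrac{\alpha p_s^\varepsilon}{\xi p_s^\varepsilon+1}N^\varepsilon\le\tfrac{\alpha}{\xi}N^\varepsilon$, i.e.\ it is of the \emph{same order} as the logistic gain $r_0N^\varepsilon$ and does not become negligible as $N^\varepsilon$ gets small; if $\alpha/\xi>r_0$ the net per-capita rate is negative and no growth can be inferred. More importantly, the bound the paper actually uses downstream (to control $(N^\varepsilon)^{p-3}$ for $p<3$ via $(N^\varepsilon)^{p-3}\le\max\{\|N^\varepsilon\|_{L^\infty_{x,t}}^{p-3},\,m^{p-3}\}$ in the proof of the lemma that follows) is a \emph{pointwise} lower bound $N^\varepsilon\ge m>0$ a.e.\ on $Q_T$ --- the $\esssup$ in the statement is evidently playing the role of an essential infimum --- and a spatial-average/contradiction argument cannot produce that. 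The correct mechanism, which is what \cite{conforto2018reaction} exploits and which is only available when $\xi>0$, is again the pointwise bound on the per-capita predation rate: one has $\partial_tN^\varepsilon-d_1\Delta N^\varepsilon\ge-\bigl(r_0\eta\|N^\varepsilon\|_{L^\infty_{x,t}}+\alpha/\xi\bigr)N^\varepsilon$ with an $\varepsilon$-independent constant $K$, so the comparison principle gives $N^\varepsilon(x,t)\ge e^{-Kt}\bigl(e^{d_1t\Delta}N_{in}\bigr)(x)\ge m>0$ uniformly in $\varepsilon$. You should replace your contradiction argument by this comparison argument.
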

 
\section{Improved convergence}\label{sec:3}

This section is devoted to the improved convergence results provided by Theorem \ref{thm2} in both cases $\xi>0$ and $\xi=0$, where the latter is more difficult. For the sake of convenience, we will not recall in the statements the assumption on initial data in Theorem \ref{thm2},  which are $N_{in}\in W^{2,q}(\Omega)$ for  $q>N+2$, and $p_{h,in}, p_{s,in}\in L^{2+\delta}(\Omega)$  for  $\delta>0$, as well as we always let $(N^\varepsilon, p_s^\varepsilon,p_h^\varepsilon)$ be the  solution to (\ref{e1}) for $\varepsilon>0$. 

\medskip

We give a definition of weak solutions to the limiting system.

\begin{definition} \label{WeakSolDef} A nonnegative couple $(N,P)$ is called a (nonnegative) weak solution to the limiting system if 
$$N\in L^\infty(Q_T), \quad N,P \in C([0,T];L^2(\Omega)) \cap L^2(0,T;H^1(\Omega)), \quad \partial_t N, \partial_tP \in L^2(0,T;(H^1(\Omega))'),$$  
it satisfies the initial condition
\begin{equation}
	N(x,0) = N_{in}(x), \quad P(x,0) = p_{h,in}(x) + p_{s,in}(x), \quad\text{a.e.}\quad x\in\Omega.
\end{equation}
and for all $\phi \in L^2( 0,T;H^1(\Omega))$, $\psi\in L^2(0,T;H^2(\Omega))$, $\partial_n \psi =0$ on $\partial\Omega \times (0,T)$,  

\begin{itemize}
\item[-] if $\xi>0$,  it satisfies
\begin{align}
  \begin{array}{lcl}
\displaystyle \iint_{Q_T} ( \partial_t N \phi  + d_1  \nabla N \cdot \nabla \phi ) = \displaystyle \iint_{Q_T} ( r_0( 1-\eta N )N    - \gamma \varphi(N,P) ) \phi, \vspace*{0.15cm} \nonumber \\ 
\displaystyle \iint_{Q_T} (\partial_t P \psi + d_2  \nabla P \cdot \nabla \psi ) = \displaystyle     (d_3-d_2) \iint_{Q_T} \varphi(N,P) \Delta \psi + \iint_{Q_T} ( \Gamma  \varphi(N,P)\psi -\mu P \psi ),
\end{array}  
\end{align}

\item[-] if $\xi=0$,  it satisfies
\begin{align}
  \begin{array}{lcl}
\displaystyle \iint_{Q_T} ( \partial_t N \phi  + d_1  \nabla N \cdot \nabla \phi ) = \displaystyle \iint_{Q_T} \bigg( r_0(1-\eta N)N - \frac{\gamma NP}{\alpha N + \gamma} \bigg) \phi, \vspace*{0.15cm} \nonumber \\ 
\displaystyle \iint_{Q_T} (\partial_t P \psi + d_2  \nabla P \cdot \nabla \psi )  =  \displaystyle   (d_3-d_2) \iint_{Q_T} \frac{\alpha N P}{\alpha N + \gamma} \Delta\psi + \iint_{Q_T} \bigg( \Gamma \frac{\alpha N P}{\alpha N + \gamma}\psi - \mu P \psi \bigg).
\end{array}  
\end{align}
\end{itemize}
\end{definition}

\subsection{The case $\xi>0$}

We will prove part (a) of Theorem \ref{thm2}.  The proof is based on Lemma  \ref{CriManLem}, which provides the strong convergence to the slow manifold,  Lemma \ref{LemmaRegularityPsPh}, giving the compactness of the solution sequence, and the proof of Theorem 1.2 in \cite{conforto2018reaction}.
 
\begin{lemma}
For any $2\le p<\infty$,
\begin{align}
\bigg\| \dfrac{\alpha  p_s^\varepsilon}{\xi p_s^\varepsilon+1}N^\varepsilon  - \gamma  p_h^\varepsilon \bigg\| _{L^p_{x,t}}  \le C \varepsilon^{1/p}. 
\end{align} 
\end{lemma}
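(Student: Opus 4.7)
My strategy is to combine the energy estimate of Lemma \ref{LemmaEnergy} (case $\xi>0$) with an elementary monotonicity inequality and the $\varepsilon$-uniform bounds of Lemma \ref{SecondResultCinziaPaper}(a). The key pointwise inequality is that, for $a,b \ge 0$ and $p \ge 2$,
$$(a-b)(a^{p-1}-b^{p-1}) \ge c_p \, |a-b|^p,$$
for a positive constant $c_p$ (one may take $c_p = 4(p-1)/p^2$, obtained by passing through $(a^{p/2}-b^{p/2})^2$ and using convexity of $x \mapsto x^{p/2}$ for $p \ge 2$). Setting $a = \alpha p_s^\varepsilon N^\varepsilon/(\xi p_s^\varepsilon+1)$ and $b = \gamma p_h^\varepsilon$, Lemma \ref{LemmaEnergy} then yields
$$\frac{c_p}{\varepsilon}\iint_{Q_T}\bigg| \frac{\alpha p_s^\varepsilon}{\xi p_s^\varepsilon+1}N^\varepsilon - \gamma p_h^\varepsilon \bigg|^p \lesssim H^\varepsilon(0) + \iint_{Q_T} \Big( p_s^\varepsilon (N^\varepsilon)^{p-2}|\partial_t N^\varepsilon| + (N^\varepsilon)^{p-1} p_h^\varepsilon + (\xi p_s^\varepsilon+1)^2 (N^\varepsilon)^{p-3}|\nabla N^\varepsilon|^2 \Big),$$
reducing the problem to an $\varepsilon$-uniform bound on the right-hand side.

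For $p\ge 3$, every term is controlled via Lemma \ref{SecondResultCinziaPaper}(a): $H^\varepsilon(0)$ is finite using $\alpha r/(\xi r+1) \le \alpha/\xi$ together with $N_{in}\in L^\infty$ and $p_{s,in},p_{h,in}\in L^{2+\delta}$; the integrand $p_s^\varepsilon(N^\varepsilon)^{p-2}|\partial_t N^\varepsilon|$ is handled by H\"older using $\|N^\varepsilon\|_{L^\infty_{x,t}}^{p-2} \|p_s^\varepsilon\|_{L^2_{x,t}} \|\partial_t N^\varepsilon\|_{L^2_{x,t}}$; the integrand $(N^\varepsilon)^{p-1}p_h^\varepsilon$ similarly; and $(\xi p_s^\varepsilon+1)^2(N^\varepsilon)^{p-3}|\nabla N^\varepsilon|^2$ is dominated by $\|N^\varepsilon\|_{L^\infty}^{p-3}\|\nabla N^\varepsilon\|_{L^\infty}^2 (1+\|p_s^\varepsilon\|_{L^2_{x,t}}^2)$. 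Dividing by $c_p/\varepsilon$ gives the claim for all $p\ge 3$.

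The principal obstacle is $p\in[2,3)$, for which the factor $(N^\varepsilon)^{p-3}$ is singular on $\{N^\varepsilon = 0\}$ and Lemma \ref{SecondResultCinziaPaper}(a) only supplies $\esssup_{Q_T} N^\varepsilon \ge m > 0$ rather than a pointwise lower bound. I would handle the endpoint $p=2$ separately by revisiting the derivation of Lemma \ref{LemmaEnergy}: for $p=2$ the cross term produced by integration by parts is $\int \frac{\alpha p_s^\varepsilon}{\xi p_s^\varepsilon+1} \nabla p_s^\varepsilon\cdot\nabla N^\varepsilon$, which thanks to $\alpha p_s^\varepsilon/(\xi p_s^\varepsilon+1) \le \alpha/\xi$ can be estimated directly by $(\alpha/\xi)\|\nabla N^\varepsilon\|_{L^\infty_{x,t}} \|\nabla p_s^\varepsilon\|_{L^1_{x,t}}$, thereby avoiding the Cauchy--Schwarz splitting that would create the $(N^\varepsilon)^{-1}$ factor. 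This produces the endpoint rate $\varepsilon^{1/2}$ in $L^2_{x,t}$. Intermediate exponents $p\in(2,3)$ then follow from H\"older (Lyapunov) interpolation: choosing $\theta$ with $1/p = \theta/2 + (1-\theta)/3$, one has rate $\varepsilon^{\theta/2 + (1-\theta)/3} = \varepsilon^{1/p}$, closing the argument for all $2 \le p < \infty$.
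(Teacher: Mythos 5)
Your proposal is correct, and for $p\ge 3$ it coincides with the paper's argument: both pass from the energy inequality of Lemma \ref{LemmaEnergy} to $\frac{1}{\eps}\iint_{Q_T}|a-b|^p$ via the monotonicity inequality $(a-b)(a^{p-1}-b^{p-1})\ge c_p|a-b|^p$ (the paper uses it with $c_p=1$, which is also valid for $p\ge2$ and $a,b\ge0$), and then control the right-hand side term by term with Lemma \ref{SecondResultCinziaPaper}(a). Where you genuinely diverge is the range $2\le p<3$. The paper absorbs the singular factor $(N^\eps)^{p-3}$ by invoking a uniform \emph{pointwise} positive lower bound on $N^\eps$ (the bound $(N^\eps)^{p-3}\le\max\{\|N^\eps\|_{L^\infty}^{p-3};m^{p-3}\}$ only makes sense if $\essinf N^\eps\ge m$; the $\esssup$ appearing in Lemma \ref{SecondResultCinziaPaper}(a) is evidently a typo for $\essinf$, the lower bound being available in the case $\xi>0$ from \cite{conforto2018reaction}). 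You instead avoid the lower bound entirely: at $p=2$ you go back into the derivation of Lemma \ref{LemmaEnergy} and estimate the cross term $\int\frac{\alpha p_s^\eps}{\xi p_s^\eps+1}\nabla p_s^\eps\cdot\nabla N^\eps$ directly using $\frac{\alpha p_s^\eps}{\xi p_s^\eps+1}\le\alpha/\xi$ together with $\nabla N^\eps\in L^\infty_{x,t}$ and $\nabla p_s^\eps\in L^2_{x,t}$ (both supplied by Lemma \ref{SecondResultCinziaPaper}(a)), so the factor $(N^\eps)^{-1}$ produced by the Young splitting never appears; the intermediate exponents then follow by Lyapunov interpolation between $L^2$ and $L^3$, and the arithmetic $\theta/2+(1-\theta)/3=1/p$ reproduces exactly the rate $\eps^{1/p}$. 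This is a legitimate and arguably more robust route, since it does not rely on the strict positivity of $N^\eps$. One caveat you share with the paper: the claim that $H^\eps(0)$ is finite for all $p$ uses $\int_\Omega p_{h,in}^p$, which strictly speaking requires $p_{h,in}\in L^p(\Omega)$ rather than only $L^{2+\delta}(\Omega)$ once $p>2+\delta$; the paper glosses over this in the same way, so it is not a defect introduced by your argument, but it is worth flagging if you want the statement for all $p<\infty$ under the stated hypotheses.
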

 
\begin{proof} By the energy estimate in Lemma \ref{LemmaEnergy} and $\varepsilon$-uniform estimates in  Lemma \ref{SecondResultCinziaPaper},  we have   
\begin{align*}
& \frac{1}{\varepsilon} \iint_{Q_T} \bigg| \dfrac{\alpha  p_s^\varepsilon}{\xi p_s^\varepsilon+1}N^\varepsilon  - \gamma  p_h^\varepsilon \bigg| ^{p} \nonumber\\
& \hspace*{1cm} \le \frac{1}{\varepsilon} \iint_{Q_T} \bigg( \dfrac{\alpha  p_s^\varepsilon}{\xi p_s^\varepsilon+1}N^\varepsilon  - \gamma  p_h^\varepsilon \bigg) \bigg( \bigg( \dfrac{\alpha  p_s^\varepsilon}{\xi p_s^\varepsilon+1}N^\varepsilon  \bigg)^{p-1} - (\gamma  p_h^\varepsilon)^{p-1} \bigg) \nonumber\\
& \hspace*{1cm} \lesssim   H^\varepsilon(0) +  \iint_{Q_T} \Big( p_s^\varepsilon (N ^\varepsilon )^{p-2} |\partial_t N^\varepsilon |     +   (N ^\varepsilon)^{p-1} p_h^\varepsilon   
+    (\xi p_s^\varepsilon+1)^{2}  (N^\varepsilon  )^{p-3} |\nabla  N^\varepsilon |^2 \Big) \nonumber\\
& \hspace*{1cm} \lesssim  H^\varepsilon(0) +   \|p_s^\varepsilon\|_{L^2_{x,t}} \|\partial_t N^\varepsilon \|_{L^2_{x,t}}  +  \| N ^\varepsilon\|_{L^\infty_{x,t}}^{p-1} \| p_h^\varepsilon \|_{L^1_{x,t}}   
+   \|\xi p_s^\varepsilon+1\|_{L^2_{x,t}}^{2}   \|\nabla  N^\varepsilon \|_{L^\infty_{x,t}}^2  ,
\end{align*}
where we note that
\begin{align*}
(N ^\varepsilon )^{p-2} \le \| N ^\varepsilon\|_{L^\infty_{x,t}}^{p-2}, \quad \text{and} \quad  (N^\varepsilon  )^{p-3} \le \max\{ \| N^\varepsilon \|_{L^\infty_{x,t}}^{p-3};\, m^{p-3}  \}. 
\end{align*}
Finally, we note that the term $H^\varepsilon(0)$ is bounded uniformly in $\varepsilon >0$. 
\end{proof}

By combining Lemmas \ref{LemmaEnergy} and \ref{SecondResultCinziaPaper}, $\varepsilon$-uniform boundedness of $\{p_s^\varepsilon\}$ and $\{p_h^\varepsilon\}$ will be improved significantly in   $L^p(Q_T) \cap L^2(0,T;H^1(\Omega))$ for any $1\le p<\infty$.       

\begin{lemma} \label{LemmaRegularityPsPh}
For any $1<p<\infty$,   
\begin{align*}
& \sup_{\varepsilon>0} \Big(  \|p_s^\varepsilon\|_{L^{p}(Q_T)} + \|p_h^\varepsilon\|_{L^{p} (Q_T)}  +  \|\nabla p_s^\varepsilon\|_{L^2(Q_T)} + \|\nabla  p_h^\varepsilon\|_{L^2(Q_T)} \Big) \le C .
\end{align*}
\end{lemma}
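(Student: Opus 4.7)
The plan is to proceed in two steps: first exploit the modified energy (Lemma \ref{LemmaEnergy}) to raise the integrability of $p_h^\varepsilon$, and then transfer it to $p_s^\varepsilon$ through a parabolic estimate on the sum $P^\varepsilon := p_s^\varepsilon + p_h^\varepsilon$, whose equation contains no $1/\varepsilon$-singular term.

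For the first step, I would apply Lemma \ref{LemmaEnergy} with arbitrary $p\in[2,\infty)$ (and $\xi>0$). All three groups of terms on the right-hand side are absorbed by Lemma \ref{SecondResultCinziaPaper}(a): the $L^\infty_{x,t}$-bound on $N^\varepsilon$ together with the strict pointwise lower bound $N^\varepsilon \geq m$ (needed to handle $(N^\varepsilon)^{p-3}$ when $p<3$), the $L^q_{x,t}$-bounds on $\partial_t N^\varepsilon$ and $\nabla N^\varepsilon$ for arbitrary $q<\infty$, and the $L^2_{x,t}$-bound on $p_s^\varepsilon$. This delivers
\[
\|p_h^\varepsilon\|_{L^p(Q_T)}^p + \iint_{Q_T}(p_h^\varepsilon)^{p-2}|\nabla p_h^\varepsilon|^2 \lesssim 1,
\]
so $p_h^\varepsilon\in L^p(Q_T)$ for every $p<\infty$ and $(p_h^\varepsilon)^{p/2}\in L^2(0,T;H^1(\Omega))$. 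Specialising to $p=2$ in particular recovers $\nabla p_h^\varepsilon\in L^2(Q_T)$.

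For the second step, summing the $p_s^\varepsilon$- and $p_h^\varepsilon$-equations in \eqref{e1} eliminates the fast-reaction term and gives
\[
\partial_t P^\varepsilon - d_2 \Delta P^\varepsilon = -(d_2-d_3)\Delta p_h^\varepsilon + \Gamma p_h^\varepsilon - \mu P^\varepsilon.
\]
Testing with $(P^\varepsilon)^{p-1}$ and integrating by parts produces a cross term $(p-1)(d_2-d_3)\iint_{Q_T} (P^\varepsilon)^{p-2}\nabla P^\varepsilon \cdot \nabla p_h^\varepsilon$, which is split by Young's inequality so that, thanks to $d_3>0$, half of the $|\nabla P^\varepsilon|^2$-part is absorbed into the principal dissipation. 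The remaining $\iint_{Q_T}(P^\varepsilon)^{p-2}|\nabla p_h^\varepsilon|^2$ is controlled via H\"older combined with the weighted gradient estimate $(p_h^\varepsilon)^{p/2}\in L^2(0,T;H^1(\Omega))$ from step one, and parabolic Sobolev embedding. Starting from the $L^{2+\delta}$-base of Lemma \ref{SecondResultCinziaPaper}(a) and iterating (Moser-style), the integrability index of $P^\varepsilon$ improves at every step and one reaches any $p<\infty$ in finitely many iterations. Consequently $p_s^\varepsilon = P^\varepsilon - p_h^\varepsilon \in L^p(Q_T)$, and the $p=2$ case of the same estimate yields $\nabla P^\varepsilon\in L^2(Q_T)$, hence $\nabla p_s^\varepsilon = \nabla P^\varepsilon - \nabla p_h^\varepsilon \in L^2(Q_T)$.

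The main difficulty is the cross term $\iint_{Q_T}(P^\varepsilon)^{p-2}|\nabla p_h^\varepsilon|^2$: at each bootstrap step it must be bounded solely by quantities already controlled from the previous iteration, so that the integrability of $P^\varepsilon$ genuinely improves rather than appearing on both sides. The weighted gradient estimate on $p_h^\varepsilon$ coming from Lemma \ref{LemmaEnergy}, together with the strict positivity $d_3>0$ governing the absorption, are the two ingredients that make the iteration self-sustaining.
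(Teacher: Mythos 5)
Your first step reproduces the paper's argument: applying Lemma \ref{LemmaEnergy} (with $\xi>0$, $\beta=0$) and absorbing the right-hand side via the $L^\infty$ upper bound and the uniform lower bound on $N^\varepsilon$, the bounds on $\partial_t N^\varepsilon$, $\nabla N^\varepsilon$ and $p_s^\varepsilon$ from Lemma \ref{SecondResultCinziaPaper}(a), gives $\|p_h^\varepsilon\|_{L^p(Q_T)}\lesssim 1$ and $\iint_{Q_T}(p_h^\varepsilon)^{p-2}|\nabla p_h^\varepsilon|^2\lesssim 1$ for every $p$. Your treatment of the gradient bounds (the case $p=2$ of the test on the summed equation, followed by Young's inequality) is likewise essentially what the paper does.

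The gap is in your second step, where you replace the paper's duality argument by a Moser iteration on $P^\varepsilon=p_s^\varepsilon+p_h^\varepsilon$. Testing the summed equation with $(P^\varepsilon)^{p-1}$ for $p>2$ leaves you, after Young's inequality, with the term
\[
\iint_{Q_T}(P^\varepsilon)^{p-2}\,|\nabla p_h^\varepsilon|^2,
\]
and none of the tools you cite controls it. The only gradient information on $p_h^\varepsilon$ produced by step one is $\nabla p_h^\varepsilon\in L^2(Q_T)$ unweighted, plus the weighted bounds $\iint_{Q_T}(p_h^\varepsilon)^{q-2}|\nabla p_h^\varepsilon|^2\lesssim 1$; the weight there is a power of $p_h^\varepsilon$, whereas the weight you need is a power of $P^\varepsilon$, and $P^\varepsilon$ can be large exactly where $p_h^\varepsilon$ is small (large $p_s^\varepsilon$, small $p_h^\varepsilon$), so $(P^\varepsilon)^{p-2}$ cannot be traded for $(p_h^\varepsilon)^{q-2}$. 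A H\"older splitting $\|(P^\varepsilon)^{p-2}\|_{L^r}\,\|\nabla p_h^\varepsilon\|_{L^{2r'}}^2$ would require $\nabla p_h^\varepsilon\in L^{2r'}(Q_T)$ for some $r'>1$, which is not available and cannot be extracted from $\nabla\bigl((p_h^\varepsilon)^{q/2}\bigr)\in L^2$ without introducing negative powers of $p_h^\varepsilon$. Consequently the iteration does not close: to raise the integrability of $P^\varepsilon$ at level $p$ you need information that is itself of order $p$ (or better) on a quantity that is only known at the base level, so the exponent never genuinely improves. This is precisely the obstruction the paper circumvents by testing the inequality $\partial_t(p_s^\varepsilon+p_h^\varepsilon)-\Delta(d_2p_s^\varepsilon+d_3p_h^\varepsilon)\le\Gamma p_h^\varepsilon$ against the solution of a backward dual problem (the duality lemma in Pierre's survey): both derivatives then fall on the dual test function, no gradient of $p_h^\varepsilon$ beyond $L^2$ is ever needed, and the $L^p$ bound on $p_s^\varepsilon$ follows directly from the already established $L^p$ bound on $p_h^\varepsilon$. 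To repair your proof, substitute that duality step for the Moser iteration; the rest of your argument can stay as is.
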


\begin{proof}
By applying Lemma \ref{LemmaEnergy}, we have 
\begin{align}   
\iint_{Q_T} & \Big( ( p_h^\varepsilon)^{p}    +     (p_h^\varepsilon)^{p-2} |\nabla p_h^\varepsilon|^2 \Big) \nonumber\\ &\lesssim   H^\varepsilon(0)+  \iint_{Q_T} \Big( p_s^\varepsilon (N^\varepsilon )^{p-2} |\partial_t N^\varepsilon |     +   (N^\varepsilon)^{p-1} p_h^\varepsilon   
+    (\xi p_s^\varepsilon+1)^{2}  (N^\varepsilon  )^{p-3} |\nabla  N^\varepsilon |^2 \Big), \label{Aaaaa}     
\end{align} 
where the right hand side exists finite for all $1<p <\infty$
due to Lemma \ref{SecondResultCinziaPaper}. Hence,  $\{p_s^\varepsilon\}$ is $\varepsilon$-uniformly bounded in $L^p(Q_T)$. 
 In addition, by adding the equations of $p_s^\varepsilon$ and $p_h^\varepsilon$ in  (\ref{e1}),  
\begin{align}
\partial_t (p_s^\varepsilon + p_h^\varepsilon) - \Delta (d_2 p_s^\varepsilon + d_3 p_h^\varepsilon) \le \Gamma p_h^\varepsilon, \label{ProofLemmaRegularityPsPh2}
\end{align}
Thanks to the duality argument \cite[Lemma 3.4]{pierre2010global}, it follows    
\begin{align*}
\|p_s^\varepsilon\|_{L^p(Q_T)} \le C(\|p_h^\varepsilon\|_{L^p(Q_T)}), 
\end{align*}
i.e. the $\varepsilon$-uniform estimate in  $L^p(Q_T)$ has been passed from $\{p_h^\varepsilon\}$ to $\{p_s^\varepsilon\}$. 

\medskip

Letting $p=2$ in \eqref{Aaaaa} gives us an  $\varepsilon$-uniform boundedness of $\{\nabla p_h^\varepsilon\}$ in $L^2(Q_T)$. To pass  the   $\varepsilon$-uniform estimate in $L^2(Q_T)$ from $\{\nabla p_h^\varepsilon\}$ to $\{\nabla p_s^\varepsilon\}$,
we can multiply two sides of \eqref{ProofLemmaRegularityPsPh2} by $p_s^\varepsilon + p_h^\varepsilon$,  to get
\begin{align*}
\iint_{Q_T} \nabla (d_2 p_s^\varepsilon + d_3 p_h^\varepsilon) \nabla ( p_s^\varepsilon + p_h^\varepsilon) \le C ,
\end{align*}
which  implies an  $\varepsilon$-uniform boundedness of $\{\nabla p_h^\varepsilon\}$ in $L^2(Q_T)$ due to the Young's inequality.
\end{proof}

\subsection{The case $\xi=0$}

We now focus on part (b) of Theorem \ref{thm2}. The main difficulty is caused by lacking an $\varepsilon$-uniformly lower bound for the prey  density $N^\varepsilon$ except in low dimensions $d\in\{1,2\}$.
The best-known $\varepsilon$-uniform boundedness of both $\{p_s^\varepsilon\}, \{p_h^\varepsilon\}$ are just in  $L^{2+}(Q_T)$ \cite{canizo2014improved}, which is not enough to imply a $\varepsilon$-uniform bound for the gradient $\{\nabla N^\varepsilon\}$ as well as to deal with the right hand side of the energy estimate in Lemma \ref{LemmaEnergy} when $\xi=0$. 

\medskip

We first show in Lemma \ref{lemma5} that the strong convergence of the slow manifold can be proven in all dimensions by using a modified energy, i.e.~choosing $\beta(\varepsilon)$ suitably in Lemma \ref{LemmaEnergy}. The strong convergence of $p_s^\varepsilon$ and $p_h^\varepsilon$ can be proven under under the condition  \eqref{quasi-uniform-diffusion}, which  we will show that it is enough to obtain strong convergence, having a lower bound for $\{N^\varepsilon\}$ and an $L^\infty(Q_T)$-bound for $\{\nabla N^\varepsilon\}$ (Lemma \ref{RegularityCaseXi0}).  Strong convergence of $p_s^\varepsilon$ and $p_h^\varepsilon$ with $d\geq 3$ without condition  \eqref{quasi-uniform-diffusion}, remains an interesting open problem.

\begin{lemma}\label{lemma5} It holds   
\begin{align}
 \| \alpha  p_s^\varepsilon N^\varepsilon  - \gamma  p_h^\varepsilon \| _{L^{4/3}_{x,t}} \lesssim \varepsilon^{1/6}. \label{StrongManiCase2}
\end{align}
\end{lemma}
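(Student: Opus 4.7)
The plan is to apply Lemma~\ref{LemmaEnergy} in the case $\xi=0$ with $p=2$ and a modification parameter $\beta = \beta(\varepsilon) > 0$, to be chosen at the end as a positive power of $\varepsilon$: the strictly positive $\beta$ plays the role of a synthetic lower bound on $N^\varepsilon$ and is the key device bypassing the lack of a uniform one in dimensions $d \geq 3$.

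Setting $D^\varepsilon := \alpha p_s^\varepsilon N^\varepsilon - \gamma p_h^\varepsilon$ and expanding the dissipation in Lemma~\ref{LemmaEnergy} as
\begin{equation*}
\frac{1}{\varepsilon}\iint_{Q_T} D^\varepsilon\bigl[\alpha p_s^\varepsilon(N^\varepsilon + \beta) - \gamma p_h^\varepsilon\bigr] = \frac{1}{\varepsilon}\iint_{Q_T}(D^\varepsilon)^2 + \frac{\alpha\beta}{\varepsilon}\iint_{Q_T} p_s^\varepsilon D^\varepsilon,
\end{equation*}
I would first use Young's inequality on the cross term to extract a positive multiple of $(D^\varepsilon)^2/\varepsilon$, at the price of a remainder $\lesssim \beta^2/\varepsilon$ (since $\iint(p_s^\varepsilon)^2 \lesssim 1$ by Lemma~\ref{SecondResultCinziaPaper}). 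I would then bound the right-hand side of Lemma~\ref{LemmaEnergy}: the terms $H^\varepsilon(0)$, $\iint (N^\varepsilon + \beta)p_s^\varepsilon p_h^\varepsilon$, and $\iint(p_s^\varepsilon)^2\partial_t N^\varepsilon$ are handled by Hölder using the a priori $L^{2+}_{x,t}$-bounds on $p_s^\varepsilon, p_h^\varepsilon, \partial_t N^\varepsilon$ and the $L^\infty_{x,t}$-bound on $N^\varepsilon$, while the troublesome term $\iint (p_s^\varepsilon)^2|\nabla N^\varepsilon|^2/(N^\varepsilon + \beta)$ is estimated by combining $1/(N^\varepsilon + \beta) \leq 1/\beta$ with a Hölder split calibrated to $\|p_s^\varepsilon\|_{L^{2+}_{x,t}}, \|\nabla N^\varepsilon\|_{L^{4+}_{x,t}} \le C$, yielding a control of the form $\lesssim \beta^{-\gamma}$ for some effective $\gamma>0$.

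Combining these bounds gives an inequality of the form $\iint(D^\varepsilon)^2 \lesssim \varepsilon(1 + \beta^{-\gamma}) + \beta^2$. Choosing $\beta = \varepsilon^{\theta}$ with $\theta$ balancing the right-hand side produces a polynomial $L^2$-rate on $D^\varepsilon$, and the stated $L^{4/3}$-bound follows by interpolation
\begin{equation*}
\|D^\varepsilon\|_{L^{4/3}_{x,t}} \leq \|D^\varepsilon\|_{L^1_{x,t}}^{1/2}\|D^\varepsilon\|_{L^{2}_{x,t}}^{1/2},
\end{equation*}
using the uniform $L^{2+}_{x,t}$-bound on $D^\varepsilon$ (inherited from the products $p_s^\varepsilon N^\varepsilon$ and $p_h^\varepsilon$) together with the $L^1$-rate inferred from the $L^2$ estimate via Cauchy--Schwarz. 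The main obstacle is the precise control of $\iint (p_s^\varepsilon)^2|\nabla N^\varepsilon|^2/(N^\varepsilon + \beta)$ by a tractable negative power of $\beta$: the available integrabilities sit exactly at the Hölder borderline $L^2 \times L^4 = L^1$, so fine-tuning of the Hölder exponents (and possibly an integration by parts exchanging $|\nabla N^\varepsilon|^2$ for $\Delta N^\varepsilon \in L^{2+}_{x,t}$) is essential to optimise $\gamma$ and hence the final rate, which is slower than the $\varepsilon^{1/2}$ obtained in the $\xi>0$ case.
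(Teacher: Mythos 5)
Your overall strategy---the modified energy with $\beta(\varepsilon)>0$ acting as an artificial lower bound for $N^\varepsilon$, a balance of $\varepsilon$ against powers of $\beta$, and a final H\"older/interpolation step---is exactly the paper's, but your choice $p=2$ in $H^\varepsilon$ makes the right-hand side of Lemma~\ref{LemmaEnergy} uncontrollable, and this is a genuine gap rather than a matter of ``fine-tuning H\"older exponents''. In the case $\xi=0$ the only available uniform bounds (part (b) of Lemma~\ref{SecondResultCinziaPaper}) are $p_s^\varepsilon\in L^{2+}_{x,t}$, $\partial_t N^\varepsilon\in L^{2+}_{x,t}$, $\nabla N^\varepsilon\in L^{4+}_{x,t}$. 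With $p=2$ the term $\iint_{Q_T}(p_s^\varepsilon)^2|\partial_t N^\varepsilon|$ already requires $\tfrac{2}{2+\delta}+\tfrac{1}{2+\kappa}\le 1$, i.e.\ essentially $p_s^\varepsilon\in L^4_{x,t}$, which is not known; and for $\iint_{Q_T}(p_s^\varepsilon)^2|\nabla N^\varepsilon|^2/(N^\varepsilon+\beta)$ the H\"older budget is $\tfrac{2}{2}+\tfrac{2}{4}=\tfrac{3}{2}>1$, so the product $(p_s^\varepsilon)^2|\nabla N^\varepsilon|^2$ is not uniformly in $L^1(Q_T)$ no matter how you split the exponents or how large a negative power of $\beta$ you are willing to pay; your assertion that the integrabilities ``sit exactly at the borderline $L^2\times L^4=L^1$'' conflates the integrability of $p_s^\varepsilon$ with that of $(p_s^\varepsilon)^2$. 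Integrating by parts to trade $|\nabla N^\varepsilon|^2$ for $\Delta N^\varepsilon\in L^{2+}_{x,t}$ does not help, since $(p_s^\varepsilon)^2\Delta N^\varepsilon$ has the same deficit (and produces an extra $\nabla p_s^\varepsilon$ term that is not yet under control at this stage).

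The paper's resolution is to take $p=1^+$ instead of $p=2$: then every right-hand-side term carries $(p_s^\varepsilon)^{p}$ or $(p_s^\varepsilon)^{p-1}$ with $p$ close to $1$, and all of them become uniformly integrable against $\partial_t N^\varepsilon$ and $|\nabla N^\varepsilon|^2\in L^{2+}_{x,t}$. The price is that the dissipation no longer yields $(D^\varepsilon)^2$ directly: by the mean value theorem it controls
$\iint_{Q_T} \bigl( \alpha  p_s^\varepsilon  (N^\varepsilon + \beta(\varepsilon)) - \gamma  p_h^\varepsilon  \bigr)^2 \bigl( \alpha  p_s^\varepsilon (N^\varepsilon + \beta(\varepsilon))  +  \gamma  p_h^\varepsilon\bigr)^{p-2}$,
from which one recovers an $L^{4/(4-p)}$-norm (not an $L^2$-norm) by H\"older against the uniform $L^2_{x,t}$-bound of $\alpha p_s^\varepsilon(N^\varepsilon+\beta(\varepsilon))+\gamma p_h^\varepsilon$. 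The bookkeeping $(N^\varepsilon+\beta(\varepsilon))^{p-2}\le\beta(\varepsilon)^{p-2}$, $(N^\varepsilon+\beta(\varepsilon))^{p-3}\le\beta(\varepsilon)^{p-3}$ and the choice $\beta(\varepsilon)=\varepsilon^{1/(4-p)}$ then yield the rate $\varepsilon^{1/(2(4-p))}$, which gives \eqref{StrongManiCase2} as $p\to 1^+$ after removing $\beta(\varepsilon)$ by the triangle inequality. This also explains why the statement is in $L^{4/3}$ with rate $\varepsilon^{1/6}$ rather than in $L^2$: an $L^2$-rate, which your plan would deliver, is precisely what the available a priori estimates cannot support, and that discrepancy with the claimed exponent should itself have been a warning sign.
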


\begin{proof}
The energy estimate in Lemma \ref{LemmaEnergy} gives 
\begin{align*}
   \frac{1}{\varepsilon} \iint_{Q_T} & \Big( \alpha  p_s^\varepsilon (N^\varepsilon + \beta(\varepsilon)) - \gamma  p_h^\varepsilon  \Big) \Big( ( \alpha  p_s^\varepsilon (N^\varepsilon + \beta(\varepsilon)) )^{p-1} - (\gamma  p_h^\varepsilon)^{p-1} \Big)         \nonumber\\
&  \lesssim   H(0) +   \frac{\beta(\varepsilon)}{\varepsilon} \iint_{Q_T}    (p_s^\varepsilon)^p    +        \iint_{Q_T} (p_s^\varepsilon)^p (N^\varepsilon + \beta(\varepsilon))^{p-2} \partial_t N^\varepsilon \nn\\
& \qquad + \iint_{Q_T} \Big( ( N^\varepsilon + \beta(\varepsilon)  )^{p-1}  (p_s^\varepsilon )^{p-1}   p_h^\varepsilon   
+  ( p_s^\varepsilon)^{p}  (N^\varepsilon + \beta(\varepsilon))^{p-3}    |\nabla N^\varepsilon |^2 \Big)  \nonumber\\
&  \lesssim   H(0) +   \frac{\beta(\varepsilon)}{\varepsilon} \iint_{Q_T}    (p_s^\varepsilon)^p   \nn \\
& \qquad +  \iint_{Q_T} \Big((\beta(\varepsilon))^{p-2} (p_s^\varepsilon)^p   |\partial_t N^\varepsilon|  +   (p_s^\varepsilon )^{p-1}   p_h^\varepsilon   
+ (\beta(\varepsilon))^{p-3}   ( p_s^\varepsilon)^{p}  |\nabla N^\varepsilon |^2 \Big)  \nonumber\\
&  \lesssim   H(0) +   \frac{\beta(\varepsilon)}{\varepsilon} \iint_{Q_T}    (p_s^\varepsilon)^p  \nn \\
& \qquad +   \iint_{Q_T} \Big((\beta(\varepsilon))^{p-2} (p_s^\varepsilon)^p   |\partial_t N^\varepsilon|  +   (p_s^\varepsilon )^{p-1}   p_h^\varepsilon   
+ (\beta(\varepsilon))^{p-3} ( p_s^\varepsilon)^{p}  |\nabla N^\varepsilon |^2 \Big)  ,
\end{align*}
in which we used $(N^\varepsilon +\beta(\varepsilon))^{s} \le  (\beta(\varepsilon))^s $ for  $s\in \{p-2;p-3\}$, and noted that the term $(N_\delta^\varepsilon)^{p-1}$ with $1<p\le 2$  is $\varepsilon$-uniformly bounded from above. By the Mean Value Theorem,   
\begin{align*}
& (p-1)\Big( \alpha  p_s^\varepsilon  (N^\varepsilon + \beta(\varepsilon)) - \gamma  p_h^\varepsilon  \Big)^2 \Big( \alpha  p_s^\varepsilon (N^\varepsilon + \beta(\varepsilon))  +  \gamma  p_h^\varepsilon\Big)^{p-2}  \nn\\
& \qquad \le \Big( \alpha  p_s^\varepsilon  (N^\varepsilon + \beta(\varepsilon)) - \gamma  p_h^\varepsilon  \Big) \Big( ( \alpha  p_s^\varepsilon (N^\varepsilon + \beta(\varepsilon)) )^{p-1} - (\gamma  p_h^\varepsilon)^{p-1} \Big),
\end{align*}
which yields 
\begin{align*}
&   (p-1) \iint_{Q_T}  \Big( \alpha  p_s^\varepsilon  (N^\varepsilon + \beta(\varepsilon)) - \gamma  p_h^\varepsilon  \Big)^2 \Big( \alpha  p_s^\varepsilon (N^\varepsilon + \beta(\varepsilon))  +  \gamma  p_h^\varepsilon\Big)^{p-2} \nonumber\\
&  \qquad \lesssim    Q(\varepsilon) \left(H(0) +  \iint_{Q_T}    (p_s^\varepsilon)^p    +    (p_s^\varepsilon)^p   |\partial_t N^\varepsilon|  +   (p_s^\varepsilon )^{p-1}   p_h^\varepsilon   
+  ( p_s^\varepsilon)^{p}  |\nabla N^\varepsilon |^2   \right),
\end{align*}
where 
\begin{align*}
Q(\varepsilon):=  \varepsilon + \beta(\varepsilon)   + \varepsilon (\beta(\varepsilon))^{p-2} + \varepsilon +  \varepsilon (\beta(\varepsilon))^{p-3}. 
\end{align*}
 
Let $p=1^+$. Then all integrals on the right hand side are uniformly bounded due to Lemma~\ref{SecondResultCinziaPaper}. 
On the other hand, by choosing $\beta(\varepsilon)=\varepsilon^{1/(4-p)}$, we have  
\begin{align*}
Q(\varepsilon) \le  C\varepsilon^{1/(4-p)}.  
\end{align*}
Hence, 
\begin{align*}
\iint_{Q_T}  \Big( \alpha  p_s^\varepsilon  (N^\varepsilon + \beta(\varepsilon)) - \gamma  p_h^\varepsilon  \Big)^2 \Big( \alpha  p_s^\varepsilon (N^\varepsilon + \beta(\varepsilon))  +  \gamma  p_h^\varepsilon\Big)^{p-2} \le C \varepsilon^{1/(4-p)} . 
\end{align*}
Now, by the H\"older's inequality, we have
\begin{align*}
   \iint_{Q_T} | \alpha  p_s^\varepsilon  (N^\varepsilon + \beta(\varepsilon)) - \gamma  p_h^\varepsilon  |^{4/(4-p)} \le C \varepsilon^{2/(4-p)^2} ,
\end{align*}
which directly shows inequality \eqref{StrongManiCase2}  by substituting $p=1^+$ and applying the triangle inequality.
\end{proof}

\begin{lemma} \label{RegularityCaseXi0}  If $d_2$ and $d_3$ fulfill  the additional condition \eqref{quasi-uniform-diffusion} and $N_{in}\in W^{2,q_0}(\Omega)$, $q_0=q'_0/(q'_0-1)$, then  
\begin{align} 
	&   \sup_{\varepsilon>0} \left(    \| p_s^\varepsilon\|_{L^{q_0}_{x,t}\cap L^2_tH^1_x}  +   \|p_h^\varepsilon\|_{L^{q_0}_{x,t}\cap L^2_tH^1_x}    \right) \le  C , \label{RegCaseXi0N}
\end{align}
and
\begin{align} 
	&   \sup_{\varepsilon>0} \left( \|N^\varepsilon\|_{L^{\infty}_{x,t}} + \|\partial_t N^\varepsilon\|_{L^{q_0}_{x,t}} + \|\nabla N^\varepsilon\|_{L^{4+}_{x,t} \cap L^{2q_0}_{x,t}}   \right) \le  C .  \label{RegCaseXi0P} 
\end{align}
\end{lemma}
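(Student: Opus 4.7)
The heart of the argument is a uniform $L^{q_0}_{x,t}$ bound on the total predator density $P^\varepsilon := p_s^\varepsilon + p_h^\varepsilon$ via a duality argument crucially relying on \eqref{quasi-uniform-diffusion}; the rest of \eqref{RegCaseXi0N}--\eqref{RegCaseXi0P} will follow from parabolic maximal regularity, Gagliardo--Nirenberg interpolation, and Lemma \ref{LemmaEnergy} applied with the simple choice $\beta(\varepsilon)\equiv 1$.

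Summing the equations for $p_s^\varepsilon$ and $p_h^\varepsilon$ in \eqref{e1} cancels the singular $1/\varepsilon$ contribution, and the identity $d_2 p_s^\varepsilon + d_3 p_h^\varepsilon = \tfrac{d_2+d_3}{2}P^\varepsilon + \tfrac{d_2-d_3}{2}(p_s^\varepsilon - p_h^\varepsilon)$ recasts the sum as
\begin{equation*}
\partial_t P^\varepsilon - \tfrac{d_2+d_3}{2}\Delta P^\varepsilon \;=\; \tfrac{d_2-d_3}{2}\Delta(p_s^\varepsilon - p_h^\varepsilon) - \mu P^\varepsilon + \Gamma p_h^\varepsilon.
\end{equation*}
For arbitrary $g\in L^{q_0'}(Q_T)$ with $\|g\|_{L^{q_0'}}=1$, I pair $P^\varepsilon$ with the solution $\psi$ of the backward dual problem $-\partial_t \psi - \tfrac{d_2+d_3}{2}\Delta\psi = g$, $\psi(\cdot,T)=0$, $\nabla\psi\cdot\nu = 0$ on $\partial\Omega$, and integrate by parts to obtain
\begin{equation*}
\iint_{Q_T} P^\varepsilon g \;=\; \int_\Omega P_{in}\,\psi(0) \;+\; \tfrac{d_2-d_3}{2}\iint_{Q_T}(p_s^\varepsilon - p_h^\varepsilon)\,\Delta\psi \;+\; \iint_{Q_T}(\Gamma p_h^\varepsilon - \mu P^\varepsilon)\,\psi.
\end{equation*}
Lemma \ref{LambertonLemma} (with the scaling of diffusion $D = (d_2+d_3)/2$) gives $\|\Delta\psi\|_{L^{q_0'}_{x,t}} \leq \tfrac{2\,C^{\MR}_{q_0'}}{d_2+d_3}$, while $|p_s^\varepsilon - p_h^\varepsilon|\leq P^\varepsilon$. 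Taking the supremum over admissible $g$ produces
\begin{equation*}
\|P^\varepsilon\|_{L^{q_0}_{x,t}} \;\leq\; C\|P_{in}\|_{L^{q_0}(\Omega)} \;+\; \tfrac{d_2-d_3}{d_2+d_3}\,C^{\MR}_{q_0'}\,\|P^\varepsilon\|_{L^{q_0}_{x,t}} \;+\; C_T\,\|P^\varepsilon\|_{L^{q_0}_{x,t}}.
\end{equation*}
Condition \eqref{quasi-uniform-diffusion} is exactly what lets me absorb the critical second term into the left-hand side; the remaining contribution with $C_T\to 0$ as $T\to 0$ is handled by a standard iteration over uniformly short time intervals. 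Since $0\leq p_s^\varepsilon, p_h^\varepsilon \leq P^\varepsilon$, this yields $\|p_s^\varepsilon\|_{L^{q_0}_{x,t}} + \|p_h^\varepsilon\|_{L^{q_0}_{x,t}} \leq C$.

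With these $L^{q_0}$ bounds in hand, the reaction term $r_0(1-\eta N^\varepsilon)N^\varepsilon - \alpha p_s^\varepsilon N^\varepsilon$ of the $N^\varepsilon$-equation is uniformly bounded in $L^{q_0}_{x,t}$ thanks to $\|N^\varepsilon\|_{L^\infty_{x,t}}\leq C$ from Lemma \ref{SecondResultCinziaPaper}(b). Parabolic maximal regularity with $N_{in}\in W^{2,q_0}(\Omega)$ then delivers $\|\partial_t N^\varepsilon\|_{L^{q_0}_{x,t}} + \|\Delta N^\varepsilon\|_{L^{q_0}_{x,t}} \leq C$, and the Gagliardo--Nirenberg inequality $\|\nabla u\|_{L^{2q_0}(\Omega)}^2 \lesssim \|u\|_{L^\infty(\Omega)}\|u\|_{W^{2,q_0}(\Omega)}$, integrated in time, upgrades this to $\|\nabla N^\varepsilon\|_{L^{2q_0}_{x,t}}\leq C$ (the $L^{4+}$ part of \eqref{RegCaseXi0P} already appears in Lemma \ref{SecondResultCinziaPaper}(b)). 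For the $L^2_t H^1_x$ bounds on the predators, Lemma \ref{LemmaEnergy} with $p=2$ and $\beta(\varepsilon)\equiv 1$ leaves on the left-hand side $\iint_{Q_T}|\nabla p_h^\varepsilon|^2$ together with $\iint_{Q_T}(N^\varepsilon + 1)|\nabla p_s^\varepsilon|^2 \geq \iint_{Q_T}|\nabla p_s^\varepsilon|^2$, while the three right-hand side terms $\iint (N^\varepsilon + 1)\,p_s^\varepsilon p_h^\varepsilon$, $\iint (p_s^\varepsilon)^2\,\partial_t N^\varepsilon$, and $\iint (p_s^\varepsilon)^2(N^\varepsilon + 1)^{-1}|\nabla N^\varepsilon|^2$ are all controlled via H\"older's inequality using the bounds above; crucially, $(N^\varepsilon+1)^{-1}\leq 1$ sidesteps the absence of a uniform lower bound on $N^\varepsilon$, which is precisely the role played by the positive shift $\beta$ introduced in the modified energy.

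The principal obstacle is the duality step. The diffusion mismatch $d_2 - d_3$ forces the Laplacian of the difference $p_s^\varepsilon - p_h^\varepsilon$ to appear as a source term, and since the only available \emph{a priori} control is $|p_s^\varepsilon - p_h^\varepsilon|\leq P^\varepsilon$, the quantity $\|P^\varepsilon\|_{L^{q_0}_{x,t}}$ reappears on the right-hand side with the coefficient $\tfrac{d_2-d_3}{d_2+d_3}\,C^{\MR}_{q_0'}$. The smallness condition \eqref{quasi-uniform-diffusion} is precisely what makes this absorption possible; without it, higher integrability of $p_s^\varepsilon, p_h^\varepsilon$ for $d\geq 3$ is left as an open problem.
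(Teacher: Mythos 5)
Your duality step and the treatment of $N^\varepsilon$ are essentially the paper's argument: the $L^{q_0}_{x,t}$ bound on $p_s^\varepsilon,p_h^\varepsilon$ is exactly the improved duality estimate (Lemma \ref{ImprovedDuaEst}, which the paper cites rather than reproves), and the maximal-regularity/interpolation step for $\partial_t N^\varepsilon$, $\Delta N^\varepsilon$, $\nabla N^\varepsilon$ matches the heat-regularisation argument. The gap is in the last step, where you invoke Lemma \ref{LemmaEnergy} with $\beta(\varepsilon)\equiv 1$ to get the $L^2_tH^1_x$ bounds. The left-hand side of that energy inequality also contains the singular term
\begin{equation*}
\frac{1}{\varepsilon}\iint_{Q_T}\bigl(\alpha p_s^\varepsilon N^\varepsilon-\gamma p_h^\varepsilon\bigr)\Bigl(\bigl(\alpha p_s^\varepsilon (N^\varepsilon+\beta(\varepsilon))\bigr)^{p-1}-(\gamma p_h^\varepsilon)^{p-1}\Bigr),
\end{equation*}
which is nonnegative (and hence can be discarded) only when $\beta(\varepsilon)=0$, since then both factors vanish on the same set and have the same sign. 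For $\beta>0$ the two factors change sign at different values of $\gamma p_h^\varepsilon$ (namely $\alpha p_s^\varepsilon N^\varepsilon$ versus $\alpha p_s^\varepsilon(N^\varepsilon+\beta)$), so on the intermediate region the product is negative of size $\sim\beta\,(p_s^\varepsilon)^p$; splitting off the good part leaves an error of order $\frac{\beta(\varepsilon)}{\varepsilon}\iint(p_s^\varepsilon)^p$ on the right-hand side. With $\beta\equiv1$ this is $O(1/\varepsilon)$ and cannot be absorbed, so your gradient bound degenerates to $\iint|\nabla p_s^\varepsilon|^2+|\nabla p_h^\varepsilon|^2\lesssim \varepsilon^{-1}$, which is not the uniform bound \eqref{RegCaseXi0N}. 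This is precisely why the paper uses $\beta(\varepsilon)\to0$ (with a rate) in Lemma \ref{lemma5}, where only a convergence rate for the slow manifold is sought, and reverts to $\beta=0$ for the present lemma.

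With $\beta=0$ the price is the negative powers of $N^\varepsilon$ on the right-hand side, namely $(p_s^\varepsilon)^p(N^\varepsilon)^{p-3}|\nabla N^\varepsilon|^2$ and $(p_s^\varepsilon)^p(N^\varepsilon)^{p-2}\partial_t N^\varepsilon$, which cannot be discarded by your pointwise bound $(N^\varepsilon+1)^{-1}\le1$ since there is no uniform lower bound on $N^\varepsilon$ for $d\ge3$. The paper's proof handles these by testing the $N^\varepsilon$-equation with $(N^\varepsilon)^{-\vartheta}$ to obtain an $\varepsilon$-uniform $L^1(Q_T)$ bound on $|\nabla N^\varepsilon|^2/(N^\varepsilon)^{1+\vartheta}$, and then interpolating with H\"older using $\|p_s^\varepsilon\|_{L^{q_0}_{x,t}}$ and $\|\nabla N^\varepsilon\|_{L^{2q_0}_{x,t}}$; this is where the quantitative requirement $q_0>5$ (equivalently $q_0'<5/4$ in \eqref{quasi-uniform-diffusion}) enters, a constraint your argument never uses and would not explain. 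Your proposal therefore misses the main technical point of the lemma; the remainder (passing the $L^{q_0}$ and gradient bounds between $p_h^\varepsilon$ and $p_s^\varepsilon$ via the summed equation, as in Lemma \ref{LemmaRegularityPsPh}) is fine.
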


\begin{proof} Under condition \eqref{quasi-uniform-diffusion}, the improved duality estimates \cite{canizo2014improved,einav2020indirect}, reported in Lemma \ref{ImprovedDuaEst} in Appendix \ref{appendix}, gives 
\begin{align*}
\sup_{\varepsilon>0} \left( \|p_s^\varepsilon\|_{L^{q_0}_{x,t}}  +   \|p_h^\varepsilon\|_{L^{q_0}_{x,t}} \right) \lesssim \left( \|p_{s0} \|_{L^{q_0}_{x}}  +   \|p_{h0} \|_{L^{q_0}_{x}} \right)  ,
\end{align*}
which via the heat regularisation \cite[Lemma 2.5]{tang2023rigorous} and the $L^{2q_0}_{x,t}$ in \cite{conforto2018reaction} consequently implies
\begin{align*}
\sup_{\varepsilon>0} \left(     \|\partial_t N^\varepsilon\|_{L^{q_0}_{x,t}} + \|\nabla N^\varepsilon\|_{L^{4+}_{x,t} \cap L^{2q_0}_{x,t}} \right) \le  C .
\end{align*}

We will show that  $p_h^\varepsilon$ is uniformly bounded in  $L^{q_0}_{x,t}\cap L^2_tH^1_x$. Thanks to  Lemma \ref{LemmaEnergy} with $\beta(\varepsilon)=0$, for $1<p\le2$ we have
\begin{align*}
  \iint_{Q_T}   (p_h^\varepsilon)^{p-2} |\nabla p_h^\varepsilon|^2   
  &  \lesssim   H^\varepsilon(0)   +        \iint_{Q_T}     (p_s^\varepsilon )^{p-1}   p_h^\varepsilon   
+ \iint_{Q_T} ( p_s^\varepsilon)^{p}  (N^\varepsilon)^{p-3}    |\nabla N^\varepsilon |^2   \nn\\
&\qquad     +        \iint_{Q_T} \Big( (p_s^\varepsilon)^p (N^\varepsilon )^{p-2} \partial_t N^\varepsilon  - (p_s^\varepsilon)^{p-2}      (N^\varepsilon )^{p-1}   |\nabla p_s^\varepsilon|^2 \Big) \nn\\
&=: H^\varepsilon(0)+ \iint_{Q_T}     (p_s^\varepsilon )^{p-1}   p_h^\varepsilon  +I_1^\varepsilon+I_2^\varepsilon . 
\end{align*}

\medskip

\noindent \textit{Estimate $I_1^\varepsilon$:}
 By direct computations,  multiplying the equation of $N^\varepsilon$ by $(N^\varepsilon)^{-\vartheta}$ and then integrating on $Q_T$ yields that  $\{|\nabla N|^2/(N^\varepsilon)^{1+\rho}\}$ is $\varepsilon$-uniformly bounded in $L^1(Q_T)$ for any $\vartheta\in[0,1)$. Since $q_0>5$,  we can choose $1<p\le2$   such that $
 2p/(p-1) < q_0 -1.$ 
Then, by taking $\vartheta$  such that  $2-p<\vartheta<1$,  
\begin{align*}
\frac{p\vartheta+p}{\vartheta+p-2} < \frac{2p}{p-1} < q_0 -1,
\end{align*} 
which gives $(q_0-1)(\vartheta+p-2)-p(1+\vartheta) >0$ and deduces
\begin{align}
r:=\frac{q_0(1+\vartheta)}{q_0(\rho+p-2)-p(1+\vartheta)} \le \frac{q_0(1+\vartheta)}{\vartheta+p-2}. \label{BetaCase1}
\end{align} 
Note that we can choose the number $r$  strictly greater than $1$ by taking $\vartheta$  close enough to $1$. 
Now, applying the H\"older's inequality gives
\begin{align*}
& \sup_{\varepsilon>0} I_1^\varepsilon 
  = \sup_{\varepsilon>0} \iint_{Q_T} ( p_s^\varepsilon)^{p} \frac{|\nabla N^\varepsilon|^{\frac{2(3-p)}{1+\vartheta}}}{(N^\varepsilon)^{3-p}} |\nabla N^\varepsilon|^{\frac{2(\vartheta+p-2)}{1+\vartheta}} \nn\\
& \hspace*{1cm} \le  \sup_{\varepsilon>0} \left( \| p_s^\varepsilon \|_{L^{q_0}_{x,t}}^2 \bigg\| \frac{|\nabla N^\varepsilon|^{2}}{(N^\varepsilon)^{1+\vartheta}} \bigg\|_{L^1_{x,t}}^{\frac{3-p}{1+\vartheta}} \|\nabla N^\varepsilon\|^{\frac{2(\vartheta+p-2)}{1+\vartheta}}_{L^{ \frac{2(\vartheta+p-2)}{1+\vartheta}r}_{x,t}} \right)  <\infty,  
\end{align*}
where the last factor is finite due to inequality \eqref{BetaCase1} and $\|\nabla N^\varepsilon\|_{L_{x,t}^{2q_0}}\leq C$. 

\medskip

\noindent \textit{Estimate $I_2^\varepsilon$:} By using the equation for $N^\varepsilon$, it is clear that 
\begin{align}
    I_2^\varepsilon  
= & - d_1(p-2) \iint_{Q_T} (p_s^\varepsilon)^p (N^\varepsilon )^{p-3} | \nabla N^\varepsilon |^2 - d_1 p \iint_{Q_T}  (p_s^\varepsilon)^{p-1} (N^\varepsilon )^{p-2}  \nabla p_s^\varepsilon \nabla N^\varepsilon  \nn\\
 &- \iint_{Q_T} (N^\varepsilon )^{p-1}  (p_s^\varepsilon)^{p-2}      |\nabla p_s^\varepsilon|^2 + \iint_{Q_T} (N^\varepsilon )^{p-1} (p_s^\varepsilon)^p (r_0(1-\eta N^\varepsilon)  - \alpha  p_s^\varepsilon). \nn
\end{align}
Hence,
\begin{align*}
\sup_{\varepsilon>0} I_2^\varepsilon  
&\le \big(d_1(2-p) + (d_1p)^2 \big) \sup_{\varepsilon>0} I_1^\varepsilon    
+ r_0\iint_{Q_T} (N^\varepsilon )^{p-1} (p_s^\varepsilon)^p <\infty,   
\end{align*}
where we have used the estimate  
\begin{align*}
- d_1p \iint_{Q_T} (p_s^\varepsilon)^{p-1} (N^\varepsilon )^{p-2}  \nabla p_s^\varepsilon \nabla N^\varepsilon \le   (d_1p)^2 I_1^\varepsilon  +  \iint_{Q_T} (p_s^\varepsilon)^{p-2} (N^\varepsilon )^{p-1} |\nabla p_s^\varepsilon|^2   .  
\end{align*}
By taking $p=2$, we have $\{p_h^\varepsilon\}$ is $\varepsilon$-uniformly bounded in $L^{q_0}(Q_T)\cap L^2((0,T);H^1(\Omega))$. Thanks to Lemma \ref{LemmaRegularityPsPh}, we can pass the gradient estimate from $p_h^\varepsilon$ to $p_s^\varepsilon$, and therefore  $\{p_s^\varepsilon\}$ is $\varepsilon$-uniformly bounded in  $L^{q_0}_{x,t}\cap L^2_tH^1_x$.
\end{proof}

Now we are ready to prove the second part of Theorem  \ref{thm2}. 

\begin{proof}[{\bf Proof of Theorem \ref{thm2}b}]
 By the Aubin--Lions lemma, we implies from  inequality \eqref{RegCaseXi0N} that   $\{N^\varepsilon\}$ is relatively compact in $L^{\infty}(Q_T)$. Hence there is a subsequence, which will be also denoted by $\{N^\varepsilon\}$ that strongly converges to $N$ in $L^{\infty}(Q_T)$.  It reads  
\begin{align}
N^\varepsilon \to N \quad \text{in}\quad L^{\infty}(Q_T). \label{NCon}
\end{align}
On the other hand,  since $$ \partial_t P^\varepsilon = d_2 \Delta p_s^\varepsilon + d_3\Delta p_h^\varepsilon  - \mu  p_s^\varepsilon + (\Gamma-\mu) p_h^\varepsilon,$$ the sequence $\{\partial_t P^\varepsilon\}$ is uniformly bounded in $L^2(0,T;(H^1(\Omega))')$. Moreover, it follows from inequality \eqref{RegCaseXi0P} that $\{\nabla P^\varepsilon\}$ is bounded in $L^{2}(Q_T)$.  By applying the Aubin--Lions lemma,   $\{P^\varepsilon\}$ is relatively compact in $L^2(Q_T)$. Therefore, with the regularity given by inequality \eqref{RegCaseXi0N}, we have 
\begin{align}
P^\varepsilon \to P \quad \text{in}\quad L^{q_0}(Q_T). \label{PCon} 
\end{align}
up to a subsequence. 

\medskip

By Lemma \ref{RegularityCaseXi0},  $\alpha p_h^\varepsilon - \alpha p_s^\varepsilon N^\varepsilon$ is uniformly bounded in $L^{q_0}(Q_T)$. Then, by diagonalizing and up to subsequences, $
\gamma p_h^\varepsilon - \alpha p_s^\varepsilon N^\varepsilon \to 0$ in $L^{q}(Q_T)$ 
for all $q<q_0$. This ensures that
\begin{align*}
\bigg\| p_h^\varepsilon - \frac{\alpha N^\varepsilon P^\varepsilon}{\alpha N^\varepsilon + \gamma} \bigg\|_{L^{q_0-}(Q_T)}   \le \frac{1}{\gamma}  \| \gamma p_h^\varepsilon - \alpha p_s^\varepsilon N^\varepsilon  \|_{L^{q_0-}(Q_T)} \to 0.
\end{align*}
Therefore, the triangle inequality yields
\begin{align}
 p_h^\varepsilon \to  \frac{\alpha N P }{\alpha N + \gamma}     \quad \text{in} \quad L^{q_0-}(Q_T)  \label{phCon}
\end{align}
as well as 
\begin{align}
 p_s^\varepsilon \to  \frac{\gamma P }{\alpha N + \gamma}  \quad \text{in} \quad L^{q_0-}(Q_T) . \label{psCon}
\end{align}
Now by adding the equations of $p_h^\varepsilon$, $p_s^\varepsilon$, and combining the resultant with the equation of $N^\varepsilon$,  we obtain
\begin{align} 
\left\{ \begin{array}{clllll}
 \partial_t N^\varepsilon  - d_1\Delta N^\varepsilon  &=& r_0(1-\eta N^\varepsilon )N^\varepsilon  -  \alpha  p_s^\varepsilon  N^\varepsilon  , \vspace*{0.15cm}\\
    \partial_t P^\varepsilon- d_2\Delta P^\varepsilon &=& (d_3-d_2)\Delta p_h^\varepsilon    + \Gamma p_h^\varepsilon  -\mu P^\varepsilon  ,   
\end{array}  
\right.\nn
\end{align}
which, together with  \eqref{NCon}--\eqref{psCon}, gives that $(N,P)$ is a weak solution to the limiting system. 
\end{proof}

\medskip

\noindent{\textbf{Acknowledgement.}} B.Q. Tang and B.-N. Tran has received funding from FWF under the FWF project ``Quasi-steady-state approximation for PDE", number I-5213. C.~Soresina is a member of the Istituto Nazionale di Alta Matematica (INdAM), Gruppo Nazionale per la Fisica Matematica (GNFM).

\appendix
\section{Appendix}\label{appendix}

We report here, for the reader's convenience,  two useful Lemmas.

\begin{lemma}[{\cite[Theorem 1]{lamberton1987equations}}]  \label{LambertonLemma}
	Let $D>0$. Assume that $f\in L^p(Q_T)$, $1<p<\infty$, and let $u$ be a weak solution to problem 
\begin{equation}	\nonumber
  	\begin{cases}
		\partial_t u - D \Delta u = f, &\text{ in }  Q_T,\\
		\nabla u\cdot{\nu} = 0, &\text{ on } \pa\Omega\times(0,T),\\
		u(x,0) = 0, &\text{ in } \Omega.
	\end{cases}
	\end{equation}	
	 Then there is an optimal constant $C_{p}^{\MR}$ depending only on $p,d,\Omega$ and $D$, such that
	\begin{equation}\label{maxreg2}
		\|\Delta u\|_{L^p_{x,t}}\le C_{p}^{\MR}\|f\|_{L^p_{x,t}} , 
	\end{equation}
	where the superscript $\MR$ indicates the Maximal Regularity property.
	\end{lemma}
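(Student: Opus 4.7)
The plan is to re-derive this classical parabolic maximal regularity estimate in three stages: first treat the Hilbert-space exponent $p=2$ by an energy identity, then lift to arbitrary $1<p<\infty$ either by Fourier-multiplier analysis on the whole space or (as in Lamberton's original argument) by exploiting that the Neumann heat semigroup is a symmetric submarkovian contraction on every $L^p(\Omega)$, and finally localise to the bounded smooth domain $\Omega$.

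For $p=2$ I would test the equation with $-\Delta u$ and integrate over $Q_T$. The Neumann condition and zero initial data give $\int_0^T\!\!\int_\Omega \partial_t u\cdot(-\Delta u)\,dx\,dt=\tfrac12\|\nabla u(T)\|_{L^2(\Omega)}^2\ge 0$, so after Young's inequality one obtains $\|\Delta u\|_{L^2_{x,t}}\le D^{-1}\|f\|_{L^2_{x,t}}$. For general $p$ the most transparent modern route is operator-theoretic: the Neumann Laplacian generates a bounded analytic semigroup on the UMD space $L^p(\Omega)$, one verifies $R$-boundedness of the imaginary-axis resolvents through Gaussian kernel bounds on the heat semigroup, and the Weis characterisation of maximal regularity then delivers the estimate \eqref{maxreg2}. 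Equivalently, after extending $f$ by zero to $t<0$ and passing to $\mathbb{R}^{d+1}$, one recovers $\Delta u$ from $f$ through the space-time Fourier multiplier $m(\tau,\xi)=-|\xi|^2/(i\tau+D|\xi|^2)$, which satisfies the anisotropic Mikhlin--Marcinkiewicz hypothesis and is therefore $L^p$-bounded.

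To reduce the problem on $\Omega$ to these whole-space or half-space building blocks I would use a finite partition of unity subordinate to a cover of $\overline{\Omega}$ by interior balls and boundary patches. On the interior patches the equation is locally the whole-space heat equation; on the boundary patches, a $C^{2+\kappa}$ chart straightens $\partial\Omega$, and even reflection across $\{x_d=0\}$ preserves the Neumann condition and converts the problem to the whole-space heat equation. Commutators generated by the cut-offs and the frozen-coefficient freeze involve $u$ and $\nabla u$ only, hence can be absorbed on a small time interval and then iterated across $[0,T]$. Lamberton's original argument bypasses the chart construction by working directly with the abstract contraction semigroup and exploiting a transference principle.

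The main obstacle is the jump from $p=2$ to $p\neq 2$: the energy identity is purely Hilbertian, and any $L^p$ bound genuinely requires singular-integral or operator-theoretic input (Calder\'on--Zygmund estimates on the heat kernel, anisotropic Fourier multipliers, or the UMD/$R$-boundedness framework). Once that machinery is in place the remaining work --- passing from $\mathbb{R}^d$ or a half-space to $\Omega$ and tracking the dependence of $C_p^{\MR}$ on $p$, $d$, $D$ and the geometry of $\Omega$ --- is standard but delicate, and $C_p^{\MR}$ necessarily blows up as $p\to 1$ or $p\to\infty$, consistent with the failure of maximal regularity at the endpoints.
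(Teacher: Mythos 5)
The paper offers no proof of this lemma: it is quoted verbatim from Lamberton's Theorem~1 and used as a black box in the improved duality estimate (Lemma~\ref{ImprovedDuaEst}), so there is no in-paper argument to compare yours against line by line. Your outline is a broadly correct account of how parabolic maximal $L^p$-regularity is established, but note that it is a roadmap that outsources the essential $p\neq 2$ step to heavy machinery, and the route you develop in most detail (space--time Mikhlin multipliers on $\mathbb{R}^{d+1}$ plus boundary straightening) is not Lamberton's. His proof is purely operator-theoretic: the Neumann heat semigroup is an analytic semigroup that is simultaneously contractive on every $L^q(\Omega)$, $1\le q\le\infty$ (submarkovian), and a transference principle for such semigroups yields maximal regularity on $L^p$ for all $1<p<\infty$ with no localization and no charts --- you acknowledge this in one sentence, and that abstract route is what produces the constant $C_p^{\MR}$ as an intrinsic operator norm whose value at $p=2$ follows from your energy identity and whose continuity in $p$ (by interpolation) is precisely what makes condition \eqref{quasi-uniform-diffusion} usable. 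Your $p=2$ computation is correct. One inaccuracy in the localization sketch: after straightening the boundary, the error from freezing the coefficients of the transformed Laplacian is a genuinely second-order term of the form $\sum_{i,j}\bigl(a_{ij}(y)-a_{ij}(y_0)\bigr)\partial_{y_i y_j}u$, not a term in $u$ and $\nabla u$ only; it is absorbed by the smallness of the oscillation of $a_{ij}$ on small patches (this is where the $C^{2+\kappa}$ regularity of $\partial\Omega$ enters), while only the cut-off commutators are lower order. As written, that step of your argument would not close.
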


\begin{lemma}[Improved duality estimate, \cite{canizo2014improved,einav2020indirect}] \label{ImprovedDuaEst} Let  $T>0$,  $1<q<\infty$, $k\in \mathbb{R}$. Assume that $X,Y$ are nonnegative, smooth functions  satisfying the relation
	\begin{align}
	\left\{ \begin{array}{lllllll}
	\displaystyle \partial_t (X+Y)  &\le&  \Delta ( a X + b Y ) + k (X+Y)   & \text{in } Q_T, \vspace*{0.15cm}\\
	\nabla X\cdot \nu = \nabla Y\cdot \nu&=& 0    & \text{on } \partial \Omega \times (0,T),
	\end{array} 
	\right.  \label{CrossDiffProblem}
	\end{align}
	for some constants $a,\,b >0$.   If  
	\begin{align}
	 \frac{|a-b|}{a+b}C_{q'}^{\MR}  < 1,    \label{SmallDifferenced2d3}
	\end{align}
where $q' = q/(q-1)$ is H\"older conjugate exponent of $q$,  then  
	\begin{align}
	 \left\|X\right\|_{L^q(Q_T)} + \left\|Y\right\|_{L^q_{x,t}} 
	\lesssim \left\|  X(0) + Y(0) \right\|_{L^q_x}, 
\label{ImproveEstimate} 
	\end{align}
	where the hidden constant depends continuously on $T,a,b$.
\end{lemma}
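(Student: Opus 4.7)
The plan is to adapt the classical duality method of Pierre to this improved-constant regime, in which the smallness hypothesis \eqref{SmallDifferenced2d3} compensates for the anisotropy between the diffusion coefficients $a$ and $b$. First I would introduce $U := X + Y$, $\bar a := (a+b)/2$, and the local coefficient $M := (aX + bY)/U$, so that $aX + bY = MU$ pointwise and $|M - \bar a| \le |a-b|/2$ on $Q_T$. The differential inequality \eqref{CrossDiffProblem} then reduces to $\partial_t U \le \Delta(M U) + k U$ with homogeneous Neumann data. The standard substitution $\tilde U := e^{-k t}U$ removes the linear reaction term, reducing the problem to $k = 0$ at the cost of a multiplicative factor $e^{k T}$ in the final constant.

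For the duality step, given a nonnegative $\chi \in L^{q'}(Q_T)$ with $q' = q/(q-1)$, I would solve the backward adjoint problem $-\partial_t \phi - \bar a\,\Delta \phi = \chi$ in $Q_T$, with $\partial_n \phi = 0$ on $\partial\Omega\times(0,T)$ and $\phi(T) = 0$, so that $\phi \ge 0$ by the maximum principle. Reversing time and rescaling via $\tau = \bar a\, t$ reduces this to the unit-diffusion forward heat problem, to which Lemma~\ref{LambertonLemma} applies; tracking the Jacobian of the rescaling yields
$$ \bar a\,\|\Delta \phi\|_{L^{q'}_{x,t}} \;\le\; C^{\MR}_{q'}\,\|\chi\|_{L^{q'}_{x,t}}. $$
An $L^{q'}$-contraction estimate for the Neumann heat semigroup additionally provides $\|\phi(0)\|_{L^{q'}(\Omega)} \lesssim T^{1/q}\|\chi\|_{L^{q'}_{x,t}}$.

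Testing $\partial_t \tilde U \le \Delta(M\tilde U)$ against $\phi$, and integrating by parts once in time (using $\phi(T)=0$) and twice in space (using the Neumann conditions for both $\phi$ and $MU = aX + bY$), produces the key inequality
$$ \iint_{Q_T} \chi\,\tilde U \;\le\; \int_\Omega \phi(0)\,\tilde U(0) \;+\; \iint_{Q_T} (M - \bar a)\,\tilde U\,\Delta \phi. $$
Hölder and the maximal-regularity bound above then control the last term by $\tfrac{|a-b|}{2}\|\tilde U\|_{L^{q}_{x,t}}\|\Delta\phi\|_{L^{q'}_{x,t}} \le \tfrac{|a-b|}{a+b}\,C^{\MR}_{q'}\|\tilde U\|_{L^{q}_{x,t}}\|\chi\|_{L^{q'}_{x,t}}$, which is precisely the place where the structural hypothesis \eqref{SmallDifferenced2d3} is brought to bear.

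To close, I would choose $\chi := \tilde U^{q-1}$, for which $\iint \chi\,\tilde U = \|\tilde U\|_{L^q_{x,t}}^{q}$ and $\|\chi\|_{L^{q'}_{x,t}} = \|\tilde U\|_{L^q_{x,t}}^{q-1}$, and divide through by the common factor to obtain $(1 - \tfrac{|a-b|}{a+b}C^{\MR}_{q'})\|\tilde U\|_{L^q_{x,t}} \lesssim T^{1/q}\|U(0)\|_{L^q_x}$. Under \eqref{SmallDifferenced2d3} the prefactor on the left is strictly positive, so reverting the exponential substitution and using the pointwise bounds $0 \le X, Y \le U$ to split the left-hand side yields \eqref{ImproveEstimate}, with a constant depending continuously on $T$, $a$, $b$, $k$. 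I expect the main obstacle to be the careful time-rescaling argument ensuring that the Lamberton constant $C^{\MR}_{q'}$ appears with exactly the prefactor $|a-b|/(a+b)$, since any slack there would degrade the usable range of \eqref{SmallDifferenced2d3}; a subsidiary technical point is the rigorous justification of the duality test, which one handles in the standard way by truncating $\chi = \tilde U^{q-1}$ to a bounded nonnegative function and passing to the limit at the end.
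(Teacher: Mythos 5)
Your proposal is correct. Note that the paper does not prove this lemma at all: it is quoted from the cited references (Ca\~nizo--Desvillettes--Fellner and Einav--Morgan--Tang), and your argument is a faithful reconstruction of the standard proof given there --- duality against the backward problem, maximal regularity for the mean coefficient $\bar a=(a+b)/2$, and absorption of the perturbation $(M-\bar a)\,\tilde U\,\Delta\phi$ under \eqref{SmallDifferenced2d3}. The only cosmetic difference from the reference is that you solve the adjoint problem with the \emph{constant} coefficient $\bar a$ and carry the perturbation on the primal side, whereas the cited proof solves the adjoint problem with the variable coefficient $M$ and absorbs the perturbation in an a priori estimate for $\Delta\phi$; both routes produce the same constant $\tfrac{|a-b|}{a+b}C^{\MR}_{q'}$, and you correctly identify (and resolve, via the time rescaling $\tau=\bar a t$) the normalization issue of which diffusion coefficient $C^{\MR}_{q'}$ refers to --- a point the paper itself leaves implicit.
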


 \newcommand{\noop}[1]{}


\begin{thebibliography}{00}
	
	\bibitem[BCDK21]{brocchieri2021evolution}
	Elisabetta Brocchieri, Lucilla Corrias, Helge Dietert, and Yong-Jung Kim.
	\newblock Evolution of dietary diversity and a starvation driven
	cross-diffusion system as its singular limit.
	\newblock {\em Journal of Mathematical Biology}, 83:1--40, 2021.
	
	\bibitem[BS]{BS}
	Bruno Buonomo and Cinzia Soresina.
	\newblock Behavioural effects in epidemics modelled by cross-diffusion.
	\newblock Forthcoming.
	
	\bibitem[CD14]{conforto2014rigorous}
	Fiammetta Conforto and Laurent Desvillettes.
	\newblock Rigorous passage to the limit in a system of reaction-diffusion
	equations towards a system including cross diffusions.
	\newblock {\em Communications in Mathematical Sciences}, 12(3):457--472, 2014.
	
	\bibitem[CDF14]{canizo2014improved}
	Jos{\'e}~A. Canizo, Laurent Desvillettes, and Klemens Fellner.
	\newblock Improved duality estimates and applications to reaction-diffusion
	equations.
	\newblock {\em Communications in Partial Differential Equations},
	39(6):1185--1204, 2014.
	
	\bibitem[CDS18]{conforto2018reaction}
	Fiammetta Conforto, Laurent Desvillettes, and Cinzia Soresina.
	\newblock About reaction--diffusion systems involving the {H}olling-type {II}
	and the {B}eddington--{D}e{A}ngelis functional responses for predator--prey
	models.
	\newblock {\em Nonlinear Differential Equations and Applications NoDEA},
	25(3):24, 2018.
	
	\bibitem[DS19]{desvillettes2019non}
	Laurent Desvillettes and Cinzia Soresina.
	\newblock Non-triangular cross-diffusion systems with predator--prey reaction
	terms.
	\newblock {\em Ricerche di Matematica}, 68(1):295--314, 2019.
	
	\bibitem[DT15]{desvillettes2015new}
	Laurent Desvillettes and Ariane Trescases.
	\newblock New results for triangular reaction cross diffusion system.
	\newblock {\em Journal of Mathematical Analysis and Applications},
	430(1):32--59, 2015.
	
	\bibitem[EHMM21]{eliavs2021singular}
	J{\'a}n Elia{\v{s}}, Danielle Hilhorst, Masayasu Mimura, and Yoshihisa Morita.
	\newblock Singular limit for a reaction-diffusion-{ODE} system in a neolithic
	transition model.
	\newblock {\em Journal of Differential Equations}, 295:39--69, 2021.
	
	\bibitem[EIMT22]{eliavs2022aggregation}
	J{\'a}n Elia{\v{s}}, Hirofumi Izuhara, Masayasu Mimura, and Bao~Q. Tang.
	\newblock An aggregation model of cockroaches with fast-or-slow motion
	dichotomy.
	\newblock {\em Journal of Mathematical Biology}, 85(3):28, 2022.
	
	\bibitem[EMT20]{einav2020indirect}
	Amit Einav, Jeffrey~J. Morgan, and Bao~Q. Tang.
	\newblock Indirect diffusion effect in degenerate reaction-diffusion systems.
	\newblock {\em SIAM Journal on Mathematical Analysis}, 52(5):4314--4361, 2020.
	
	\bibitem[FIMU12]{funaki2012link}
	Tadahisa Funaki, Hirofumi Izuhara, Masayasu Mimura, and Chiyori Urabe.
	\newblock A link between microscopic and macroscopic models of self-organized
	aggregation.
	\newblock {\em Networks \& Heterogeneous Media}, 7(4), 2012.
	
	\bibitem[GIS]{GIS}
	Francesco Giannino, Annalisa Iuorio, and Cinzia Soresina.
	\newblock The effect of toxicity in plant-growth dynamics: a cross-diffusion
	model.
	\newblock Forthcoming.
	
	\bibitem[IIK]{izuhara2023cross}
	Masato Iida, Hirofumi Izuhara, and Ryusuke Kon.
	\newblock Cross-diffusion derived from predator-prey models with two behavioral
	states in predators.
	\newblock Forthcoming.
	
	\bibitem[IMN06]{iida2006diffusion}
	Masato Iida, Masayasu Mimura, and Hirokazu Ninomiya.
	\newblock Diffusion, cross-diffusion and competitive interaction.
	\newblock {\em Journal of Mathematical Biology}, 53(4):617--641, 2006.
	
	\bibitem[KS23]{CKJES2023}
	Christian Kuehn and Jan-Eric Sulzbach.
	\newblock Fast reactions and slow manifolds.
	\newblock {\em ArXiv e-prints}, arxiv:2301.09368, 2023.
	
	\bibitem[Lam87]{lamberton1987equations}
	Damien Lamberton.
	\newblock Equations d'{\'e}volution lin{\'e}aires associ{\'e}es {{a}} des
	semi-groupes de contractions dans les espaces ${L}^p$.
	\newblock {\em Journal of Functional Analysis}, 72(2):252--262, 1987.
	
	\bibitem[Pie10]{pierre2010global}
	Michel Pierre.
	\newblock Global existence in reaction-diffusion systems with control of mass:
	a survey.
	\newblock {\em Milan Journal of Mathematics}, 78:417--455, 2010.
	
	\bibitem[TT23]{tang2023rigorous}
	Bao~Q. Tang and Bao-Ngoc Tran.
	\newblock Rigorous derivation of {M}ichaelis--{M}enten kinetics in the presence
	of diffusion.
	\newblock {\em ArXiv e-prints}, arxiv:2303.07913, 2023.
	
\end{thebibliography}
\end{document}